\documentclass{elsarticle}
\usepackage{lipsum}
\makeatletter
\def\ps@pprintTitle{%
 \let\@oddhead\@empty
 \let\@evenhead\@empty
 \def\@oddfoot{}%
 \let\@evenfoot\@oddfoot}
\makeatother

\usepackage{graphicx}
\usepackage{hyperref}
\usepackage{amsfonts}
\usepackage{pgf,tikz,pgfplots}
\usepackage{epstopdf}
\usepackage{amssymb,amsmath}
\usepackage{float}
\usepackage[displaymath, mathlines,running]{lineno}
\usepackage{mathtools} 
\usepackage{color}
\usepackage{blkarray}
\usepackage{enumitem}
\usepackage{fmtcount}
\usepackage{multirow}
\usepackage[normalem]{ulem} 
\usepackage{cancel}

\def\dioni#1#2{\textcolor{black}{#1}{#2}}

\newenvironment{proof}{\smallskip\noindent{{\it Proof.}}\hskip \labelsep}%
            {\hfill\penalty10000\raisebox{-.09em}{\large\bf\rm $\blacksquare$}\par\medskip}

\newtheorem{theorem}{Theorem}[section]

\newtheorem{proposition}[theorem]{Proposition}

\newtheorem{definition}{Definition}

\def\xx{\mathbf{x}}
\def\xe{\mathbf{x}^*}
\def\x12e{\mathbf{x}_{\frac12}^*}
\def\cP{\mathcal{P}}
\def\cD{\mathcal{D}}
\def\cR{\mathcal{R}}
\begin{document}
\begin{frontmatter}

\title{A Cell-Average Non-Separable Progressive Multivariate WENO Method for Image Processing Applications}\tnotetext[label1]{The first, second, fifth and sixth authors have been supported by grant PID2023-146836NB-I00 funded by MCIN/AEI/10.13039/501100011033. The fourth author has been supported by NSF grant DMS-2309249.}

\author[UV]{Inmaculada Garc\'es}
\ead{Inmaculada.Garces@uv.es}
\author[UV]{Pep Mulet}
\ead{pep.mulet@uv.es}
\author[UPCT]{Juan Ruiz-\'Alvarez}
\ead{juan.ruiz@upct.es}
\author[UB]{Chi-Wang Shu}
\ead{chi-wang\_shu@brown.edu}
\author[UV]{Dionisio F. Y\'a\~nez}
\ead{dionisio.yanez@uv.es}
\date{Received: date / Accepted: date}

\address[UV]{Departamento de Matem\'aticas, Facultad de Matemáticas, Universidad de Valencia, Valencia, Spain}
\address[UPCT]{Departamento de Matemática Aplicada y Estadística, Universidad  Polit\'ecnica de Cartagena, Cartagena, Spain}
\address[UB]{Division of Applied Mathematics, Brown University, Providence, Rhode Island, USA}
\begin{abstract}

Accurate and efficient reconstruction techniques are essential in multiresolution analysis and image compression, particularly when the data are represented as cell averages. In this work, we present a non-separable progressive multivariate Weighted Essentially Non-Oscillatory (WENO) scheme specifically designed for cell-average data, with applications to digital image processing. The proposed method extends Harten’s multiresolution framework through a non-linear WENO reconstruction adapted to the cell-average context. {\color{black} In contrast to classical WENO schemes, the progressive strategy allows the recursive recovery of high-order accuracy even when the largest stencil is affected by a discontinuity. The method achieves high-order accuracy in smooth regions together with stable}, non-oscillatory behavior near discontinuities. We also establish theoretical results regarding the consistency and approximation properties of the method. Finally, several numerical experiments on piecewise smooth functions and digital images are presented to demonstrate its performance and validate its effectiveness against the linear Lagrange reconstruction of the same order of accuracy.

\end{abstract}

%
%
\begin{keyword}
WENO, non-linear reconstruction, order of accuracy, high accuracy interpolation
\end{keyword}

\end{frontmatter}


\def\S{\mathcal{S}}

\section{Introduction. Harten's multiresolution for cell-average setting in 2D}

In recent years, several nonlinear techniques, such as the weighted essentially non-oscillatory (WENO) algorithm, originally developed for the numerical approximation of hyperbolic partial differential equations (PDEs) \cite{JiangShu}, have been refined by enhancing specific properties of the method. These improvements include better discontinuity detection, monotonicity analysis, and increased order of accuracy. Moreover, WENO methods have been extended to higher-dimensional settings.

In particular, a new progressive WENO scheme was introduced in \cite{MRSY24}, which allows interpolation at arbitrary points, unlike the classical WENO method that only interpolates at midpoints. Additionally, this progressive approach achieves increasingly higher accuracy as the evaluation point moves away from discontinuities. The core idea of the method is to use Aitken-Neville’s procedure (see \cite{gascaaitkenclasica}) to compute optimal weights at each stage (see \cite{ARSY20}), which are then replaced by nonlinear weights through a recursive algorithm.

Several academic tests have been conducted to validate the method’s properties, using examples where the data are interpreted as evaluations of an unknown function at a sequence of nodes. In this paper, we aim to extend these ideas to the context of digital image compression. To this end, we consider the data as cell averages of a function. Specifically, let $f : \Omega = [0,1]^2 \to \mathbb{R}$ be an integrable unknown function, i.e., $f \in L^1(\Omega)$, and let $h_{{\color{black}\ell}} = 1/2^{{\color{black}\ell}}$. Then, our data are

$$\bar{f}^{{\color{black}\ell}}_{i,j}=\frac{1}{|\Omega^{{\color{black}\ell}}_{i,j}|}\int_{\Omega^{{\color{black}\ell}}_{i,j}} f(x,y)dxdy,\qquad i,j=1,\hdots,2^{{\color{black}\ell}},$$
with $\Omega^{{\color{black}\ell}}_{i,j}=[(i-1)h_{{\color{black}\ell}},ih_{{\color{black}\ell}}]\times[(j-1)h_{{\color{black}\ell}},jh_{{\color{black}\ell}}]$ and $|\Omega^{{\color{black}\ell}}_{i,j}|=h_{{\color{black}\ell}}^2$. For this goal, we use Harten's multiresolution (MR) \cite{Har96}, which has been applied in industrial applications in recent years (see, e.g., \cite{cohen}). The MR framework establishes a relationship between two consecutive resolution levels, ${\color{black}\ell}-1$ and ${\color{black}\ell}$, through operators that perform reconstruction and discretization of the available data. Specifically, starting from the data at level ${\color{black}\ell}-1$, we interpolate these values using a reconstruction operator
$$
\mathcal{R}_{{\color{black}\ell}-1}: \mathbb{R}^{2^{{\color{black}\ell}-1} \times 2^{{\color{black}\ell}-1}} \to L^1(\Omega),
$$
which can be, for example, a piecewise polynomial Lagrange interpolator \cite{Har96},
$$
(\mathcal{R}_{{\color{black}\ell}-1} \bar{f}^{{\color{black}\ell}-1})(x,y) = \mathcal{I}(x,y; \bar{f}^{{\color{black}\ell}-1}),
$$
where $\bar{f}^{{\color{black}\ell}-1} = \{\bar{f}^{{\color{black}\ell}-1}_{i,j}\}_{i,j=1}^{2^{{\color{black}\ell}-1}}$ represents the cell average data at level ${\color{black}\ell}-1$. Alternatively, a nonlinear interpolation technique such as the progressive WENO method \cite{MRSY24} adapted to the cell-average context may be used.

The interpolating function $\mathcal{I}$ is constructed by imposing the following conditions:
$$|\Omega_{i,j}^{{\color{black}\ell}-1}|^{-1}\int_{\Omega_{i,j}^{{\color{black}\ell}-1}}\mathcal{I}(x,y;\bar f^{{\color{black}\ell}-1})dxdy=|\Omega_{i,j}^{{\color{black}\ell}-1}|^{-1}\bar f^{{\color{black}\ell}-1}_{i,j}=|\Omega_{i,j}^{{\color{black}\ell}-1}|^{-1}\int_{\Omega_{i,j}^{{\color{black}\ell}-1}}f(x,y)dxdy,\quad i,j=1,\hdots,2^{{\color{black}\ell}-1},$$
Finally, we compute the approximation of the values $\bar{f}^{{\color{black}\ell}}$ by discretizing the reconstructed function $\mathcal{I}$, thus
$$\tilde{f}_{i,j}^{{\color{black}\ell}}=(\mathcal{D}_{{\color{black}\ell}}(\cR_{{\color{black}\ell}-1} \bar{f}^{{\color{black}\ell}-1}))_{i,j}:=|\Omega_{i,j}^{{\color{black}\ell}}|^{-1}\int_{\Omega_{i,j}^{{\color{black}\ell}}}\mathcal{I}(x,y;\bar f^{{\color{black}\ell}-1})dxdy,\quad i,j=1,\hdots,2^{{\color{black}\ell}}.$$
In summary, we have two operators, decimation and prediction, constructed from the composition of the two previously introduced:
$$\cD_{{\color{black}\ell}}^{{\color{black}\ell}-1}\bar{f}^{{\color{black}\ell}}=\cD_{{\color{black}\ell}-1}\cR_{{\color{black}\ell}}\bar{f}^{{\color{black}\ell}},\quad \cP_{{\color{black}\ell}-1}^{{\color{black}\ell}}\bar{f}^{{\color{black}\ell}-1}=\cD_{{\color{black}\ell}}\cR_{{\color{black}\ell}-1}\bar{f}^{{\color{black}\ell}-1},$$
being the discretization operator in this particular case:
\begin{equation*}
\begin{split}
(\cD_{{\color{black}\ell}}^{{\color{black}\ell}-1}\bar{f}^{{\color{black}\ell}})_{i,j}&=h_{{\color{black}\ell}-1}^{-2}\int_{\Omega_{i,j}^{{\color{black}\ell}-1}}\mathcal{I}(x,y,\bar{f}^{{\color{black}\ell}})d(x,y)=\frac{1}{4}\sum_{k_1,k_2=-1}^0h_{{\color{black}\ell}}^{-2}\int_{\Omega_{2i+k_1,2j+k_2}^{{\color{black}\ell}}}\mathcal{I}(x,y,\bar{f}^{{\color{black}\ell}})d(x,y)\\
                                &=\frac14\sum_{k_1,k_2=-1}^0\bar{f}^{{\color{black}\ell}}_{2i+k_1,2j+k_2},
\end{split}
\end{equation*}
expression obtained from the linearity of the integral and
$$\Omega_{i,j}^{{\color{black}\ell}-1}=\bigcup_{k_1,k_2=-1}^0 \Omega_{2i+k_1,2j+k_2}^{{\color{black}\ell}}.$$
Note also that
\begin{equation*}
\begin{split}
\frac14\sum_{k_1,k_2=-1}^0\tilde{f}^{{\color{black}\ell}}_{2i+k_1,2j+k_2}&=\frac{1}{4}\sum_{k_1,k_2=-1}^0h_{{\color{black}\ell}}^{-2}\int_{\Omega_{2i+k_1,2j+k_2}^{{\color{black}\ell}}}\mathcal{I}(x,y,\bar{f}^{{\color{black}\ell}-1})d(x,y)=h_{{\color{black}\ell}-1}^{-2}\int_{\Omega_{i,j}^{{\color{black}\ell}-1}}\mathcal{I}(x,y,\bar{f}^{{\color{black}\ell}-1})d(x,y)\\
                                &=\bar{f}^{{\color{black}\ell}-1}_{i,j},
\end{split}
\end{equation*}
then if we denote the error as $e^{{\color{black}\ell}}=\bar f^{{\color{black}\ell}}-\tilde f^{{\color{black}\ell}}$ we get
$$(\mathcal{D}_{{\color{black}\ell}}^{{\color{black}\ell}-1}e^{{\color{black}\ell}})_{i,j}=\frac14\sum_{k_1,k_2=-1}^0e^{{\color{black}\ell}}_{2i+k_1,2j+k_2}=\frac14\sum_{k_1,k_2=-1}^0\bar{f}^{{\color{black}\ell}}_{2i+k_1,2j+k_2}-\frac14\sum_{k_1,k_2=-1}^0\tilde{f}^{{\color{black}\ell}}_{2i+k_1,2j+k_2}=0.$$
Therefore, if we write $d_1^{{\color{black}\ell}-1}=\{e^{{\color{black}\ell}}_{2i-1,2j-1}\}_{i=1}^{2^{{\color{black}\ell}-1}},\,d_2^{{\color{black}\ell}-1}=\{e^{{\color{black}\ell}}_{2i,2j-1}\}_{i=1}^{2^{{\color{black}\ell}-1}},\,d_3^{{\color{black}\ell}-1}=\{e^{{\color{black}\ell}}_{2i-1,2j}\}_{i=1}^{2^{{\color{black}\ell}-1}}$ then we have the following bijection:
$$\bar f^{{\color{black}\ell}}\equiv \{\bar{f}^{{\color{black}\ell}-1},d_1^{{\color{black}\ell}-1},d_2^{{\color{black}\ell}-1},d_3^{{\color{black}\ell}-1}\}.$$
If we repeat this process several times, we obtain a multiresolution representation of the matrix $\bar f^{{\color{black}\ell}}$.


Since the discretization operator is determined by the nature of the application—namely, image processing—the main objective within the multiresolution framework is to design an appropriate reconstruction operator. Traditionally, piecewise linear interpolation has been used for this purpose, serving as the reconstruction operator $\mathcal{R}_{{\color{black}\ell}}$ (see, for instance, \cite{Har96}). However, this linear approach is prone to producing the Gibbs phenomenon near discontinuities, which can significantly degrade the quality of the reconstruction. To address this issue, various nonlinear strategies have been developed over the past decades. Among them, the Essentially Non-Oscillatory (ENO) method \cite{Har96,Harten1987,MR881365,Abgrall2016xxi} or WENO method \cite{ABM,JiangShu,doi:10.1137/070679065} stand out as a pioneering techniques for avoiding spurious oscillations near discontinuities.

Building on the ENO method, the WENO technique was developed to enhance both accuracy and robustness by combining multiple interpolation stencils with adaptively computed weights \cite{ABM,JiangShu,doi:10.1137/070679065}. These nonlinear approaches have proven highly effective in preserving sharp features near discontinuities while maintaining high-order accuracy in smooth regions. {\color{black} Unlike classical WENO schemes, whose effective order may deteriorate when the largest stencil is contaminated by a discontinuity, the progressive strategy recursively enlarges the stencil and allows high-order accuracy to be recovered in such situations.}

In this work, we introduce a methodology that enables the adaptation of interpolation schemes, originally developed in the framework of pointwise data, to the setting of cell averages. Building upon this formulation, we incorporate the aforementioned WENO strategy with the purpose of applying the multiresolution framework to the problem of digital image compression.

This paper is organized as follows: Section \ref{pvtoca} is devoted to relating the point-value discretization with the cell-average in order to propose a new progressive method for this type of data. In addition, we prove some properties in the cell-average context if the constructed operators satisfy some conditions. In Section \ref{sec:multivariatelinearandwenoclassic}, we review the linear and classical WENO method in the Catersian grid. In the following section, we introduce the new progressive method in two dimensions (similar in several dimensions) and we explain the recursive algorithm to obtain the interpolator. We explicitly present in Section \ref{r3} an example for order $r=3$ to illustrate the simplicity of the algorithm and the method; for any $r$, it would be entirely similar. Some numerical experiments for the cell-averages of functions and for the compression of digital images are presented in Section \ref{expnum}. Finally, some conclusions and future work are shown in the last section.

\section{From point-value interpolation to cell-average interpolation}\label{pvtoca}

In this section, we present an approach to perform interpolation on cell averages based on interpolation from data given in the point-value discretization. The construction is carried out under the assumption that the nodes are equally spaced and located within the square $[0,1]^2$. The extension to any other square in $\mathbb{R}^2$ follows in a similar manner. Therefore, we suppose $f:[0,1]^2\to \mathbb{R}$ an unknown function with $f\in L^1([0,1]^2)$, we consider that our data are
$$\bar{f}_{i,j}^{{\color{black}\ell}-1}=h^{-2}_{{\color{black}\ell}-1}\int_{\Omega_{i,j}^{{\color{black}\ell}-1}}f(x,y)dxdy, \quad i,j=1,\hdots,2^{{\color{black}\ell}-1}=\frac{1}{h_{{\color{black}\ell}-1}},$$
with $\Omega_{i,j}^{{\color{black}\ell}-1}=[x^{{\color{black}\ell}-1}_{i-1},x^{{\color{black}\ell}-1}_i]\times[y^{{\color{black}\ell}-1}_{j-1},y^{{\color{black}\ell}-1}_j]$, $x^{{\color{black}\ell}-1}_i=ih_{{\color{black}\ell}-1}$, $y^{{\color{black}\ell}-1}_j=jh_{{\color{black}\ell}-1}$, and we want to calculate an approximation to
$$\bar{f}^{{\color{black}\ell}}_{2i+k_1,2j+k_2}=h^{-2}_{{\color{black}\ell}}\int_{\Omega_{2i+k_1,2j+k_2}^{{\color{black}\ell}}}f(x,y)dxdy, \quad k_1,k_2=-1,0.$$
Now we focus our attention on the case $k_1=0$, $k_2=0$, as the rest are similar.

It is clear that if we define the primitive function (see \cite{Har96}), $F:[0,1]^2\to \mathbb{R}$ as
\begin{equation}\label{funprimitiva}
F(x,y)=\int_0^x\int_0^y f(s,t)dsdt,
\end{equation}
we have that
\begin{equation}\label{fijcell}
\bar{f}_{i,j}^{{\color{black}\ell}-1}=\frac{1}{h^2_{{\color{black}\ell}-1}}\big(F(x^{{\color{black}\ell}-1}_{i},y^{{\color{black}\ell}-1}_{j})-F(x^{{\color{black}\ell}-1}_{i-1},y^{{\color{black}\ell}-1}_{j})-F(x^{{\color{black}\ell}-1}_{i},y^{{\color{black}\ell}-1}_{j-1})+F(x^{{\color{black}\ell}-1}_{i-1},y^{{\color{black}\ell}-1}_{j-1})\big),
\end{equation}
and also
\begin{equation}\label{fijcell2}
F^{{\color{black}\ell}-1}_{i,j}:=F(x^{{\color{black}\ell}-1}_i,y^{{\color{black}\ell}-1}_j)=h^2_{{\color{black}\ell}-1}\sum_{s,t=1}^{i,j}\bar{f}^{{\color{black}\ell}-1}_{s,t}.
\end{equation}
Thus, we have a relation between our data and the point-value discretization of the primitive function $F$. Now, we use a linear or nonlinear method to calculate an interpolator $\mathcal{I}$ which satisfies the conditions:
\begin{equation}\label{interpolador}
\mathcal{I}(x^{{\color{black}\ell}-1}_i,y^{{\color{black}\ell}-1}_j;F^{{\color{black}\ell}-1})=F^{{\color{black}\ell}-1}_{i,j}, \quad i,j=1,\hdots,2^{{\color{black}\ell}-1},
\end{equation}
and $\mathcal{I}\in \mathcal{C}^2([0,1]^2)$ (we could relax this condition) and define the reconstruction operator as:
\begin{equation}\label{opreconscell}
(\cR_{{\color{black}\ell}-1} \bar{f}^{{\color{black}\ell}-1})(x,y)=\frac{\partial^2}{\partial x\partial y}\mathcal{I}(x,y;F^{{\color{black}\ell}-1}).
\end{equation}
We  suppose that if $(x^*,y^*)\in [0,1]^2$ there exists $s>2$ such that
$$\mathcal{I}(x^*,y^*;F^{{\color{black}\ell}-1})-F(x^*,y^*)=O(h_{{\color{black}\ell}-1}^s).$$
With these ingredients, from Eq. \eqref{fijcell}, $x_{2i}^{{\color{black}\ell}}=x_{i}^{{\color{black}\ell}-1}$ and $y_{2j}^{{\color{black}\ell}}=y_{j}^{{\color{black}\ell}-1}$, we get
\begin{equation*}
\begin{split}
\bar{f}^{{\color{black}\ell}}_{2i,2j}-h_{{\color{black}\ell}}^{-2}\int^{x^{{\color{black}\ell}}_{2i}}_{x^{{\color{black}\ell}}_{2i-1}}\int^{y^{{\color{black}\ell}}_{2j}}_{y^{{\color{black}\ell}}_{2j-1}}\frac{\partial}{\partial x\partial y}\mathcal{I}(x,y;F^{{\color{black}\ell}-1})dxdy=&
h_{{\color{black}\ell}}^{-2}\big(F(x^{{\color{black}\ell}}_{2i},y^{{\color{black}\ell}}_{2j})-F(x^{{\color{black}\ell}}_{2i-1},y^{{\color{black}\ell}}_{2j})-F(x^{{\color{black}\ell}}_{2i},y^{{\color{black}\ell}}_{2j-1})+F(x^{{\color{black}\ell}}_{2i-1},y^{{\color{black}\ell}}_{2j-1})\big)\\
&-h_{{\color{black}\ell}}^{-2}\big(\mathcal{I}(x^{{\color{black}\ell}}_{2i},y^{{\color{black}\ell}}_{2j};F^{{\color{black}\ell}-1})-\mathcal{I}(x^{{\color{black}\ell}}_{2i-1},y^{{\color{black}\ell}}_{2j};F^{{\color{black}\ell}-1})\\
&\,\,\quad\qquad-\mathcal{I}(x^{{\color{black}\ell}}_{2i},y^{{\color{black}\ell}}_{2j-1};F^{{\color{black}\ell}-1})+\mathcal{I}(x^{{\color{black}\ell}}_{2i-1},y^{{\color{black}\ell}}_{2j-1};F^{{\color{black}\ell}-1})\big)\\
=&O(h_{{\color{black}\ell}-1}^{s-2}).
\end{split}
\end{equation*}
It is clear that, due to the dimensionality, if we use this technique without adding any further conditions, the order of approximation decreases to $(s-2)$. The most evident example arises when applying polynomial interpolation based on $(2s-1)\times(2s-1)$ cells, that is, $2s\times 2s$ points if we use the primitive function $F$, Eq. \eqref{funprimitiva}. In that case, we obtain $O(h^{2s-2})$, whereas it is well known that the expected order should be $O(h^{2s-1})$.  For this reason, we introduce the following definition to construct reconstruction operators with optimal order of accuracy. 
\begin{definition}\label{cellconsistent}
An operator
$$\mathcal{R}_{{\color{black}\ell}-1}:\mathbb{R}^{2^{{\color{black}\ell}-1}\times 2^{{\color{black}\ell}-1}}\to L^1([0,1]^2)$$
defined as
$$\mathcal{R}_{{\color{black}\ell}-1}(\bar f^{{\color{black}\ell}-1})(x,y)=\sum_{(i_1,i_2)\in I} a_{i_1,i_2}(x,y)\bar f^{{\color{black}\ell}-1}_{i_1,i_2},$$
being $I\subseteq \{1,\hdots,2^{{\color{black}\ell}-1}\}^2$, $\bar f^{{\color{black}\ell}-1}=\{\bar f^{{\color{black}\ell}-1}_{i,j}\}_{i,j=1}^{2^{{\color{black}\ell}-1}}$ and $a_{i_1,i_2}\in L^1([0,1]^2)$ for all $(i_1,i_2)\in I$, is cell-consistent if there exists a constant $K>0$ such that
\begin{enumerate}
\item $\int_{{\Upsilon}}\sum_{(i_1,i_2)\in I}|a_{i_1,i_2}(x,y)|\leq K$ for all compact $\Upsilon \subseteq [0,1]^2$.
\item $h_{{\color{black}\ell}-1}^{-2}\int_{\Omega^{{\color{black}\ell}-1}_{i,j}}\mathcal{R}_{{\color{black}\ell}-1}(\bar f^{{\color{black}\ell}-1})(x,y)dxdy=\bar{f}_{i,j}^{{\color{black}\ell}-1},\quad i,j=1,\hdots,2^{{\color{black}\ell}-1}.$
\item $h_{{\color{black}\ell}}^{-2}\int_{\Omega^{{\color{black}\ell}}_{2i+k_1,2j+k_2}}\mathcal{R}_{{\color{black}\ell}-1}(\bar p^{{\color{black}\ell}-1})(x,y)dxdy=\bar p^{{\color{black}\ell}}_{2i+k_1,2j+k_2},\quad \forall p\in \Pi_N$.
\end{enumerate}
%
%
With $k_1,k_2=-1,0$, $N\in\mathbb{N}$ and {\color{black} $\Pi_N$ stands for bivariate polynomials of total degree less than or equal to $N$.}
\end{definition}
Thus, we can prove the following proposition {\color{black} inspired from the theory} proposed in \cite{levin}.
\begin{proposition}
{\color{black} Let $f\in\mathcal{C}^{N+1}([0,1]^2)$. If $\mathcal{R}_{\ell-1}$ is cell-consistent, then}
$$\left|\bar f^{{\color{black}\ell}}_{2i+k_1,2j+k_2}-h_{{\color{black}\ell}}^{-2}\int_{\Omega^{{\color{black}\ell}}_{2i+k_1,2j+k_2}}\mathcal{R}_{{\color{black}\ell}-1}(\bar f^{{\color{black}\ell}-1})(x,y)dxdy\right|=O(h^{N+1}_{{\color{black}\ell}-1}),$$
with $k_1,k_2=-1,0$.
\end{proposition}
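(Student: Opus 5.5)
The plan is to follow the standard Levin-type argument: use polynomial reproduction (condition 3 of Definition \ref{cellconsistent}) to cancel the Taylor polynomial of $f$, and bound the remainder with the uniform $L^1$ bound on the coefficients (condition 1). Fix $i,j$ and $k_1,k_2\in\{-1,0\}$, and set $\Omega:=\Omega^{\ell}_{2i+k_1,2j+k_2}$. Choose as centre a point $(x_0,y_0)$ of $\Omega^{\ell-1}_{i,j}$. Let $p\in\Pi_N$ be the Taylor polynomial of $f$ of total degree $N$ at $(x_0,y_0)$. Since $f\in\mathcal{C}^{N+1}([0,1]^2)$, the remainder $g:=f-p$ satisfies
$$|g(x,y)|\le C\,\|(x,y)-(x_0,y_0)\|^{N+1},$$
with $C$ depending only on $\max|D^{N+1}f|$ and $N$.

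By linearity of the averaging operator and of $\mathcal{R}_{\ell-1}$ (it is linear in the data for fixed coefficients $a_{i_1,i_2}$), the error splits into a $p$-part and a $g$-part. The $p$-part is
$$\bar p^{\ell}_{2i+k_1,2j+k_2}-h_\ell^{-2}\int_{\Omega}\mathcal{R}_{\ell-1}(\bar p^{\ell-1}),$$
and it vanishes exactly by condition 3. It therefore remains to bound the $g$-part,
$$\bar g^{\ell}_{2i+k_1,2j+k_2}-h_\ell^{-2}\int_{\Omega}\sum_{(i_1,i_2)\in I}a_{i_1,i_2}\,\bar g^{\ell-1}_{i_1,i_2}.$$
The first term is at most $\max_{\Omega}|g|=O(h_{\ell-1}^{N+1})$, since $\Omega$ lies within distance $\sqrt2 h_{\ell-1}$ of $(x_0,y_0)$.

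For the second term, I use the local-stencil structure implicit in the reconstruction: only cells $\Omega^{\ell-1}_{i_1,i_2}$ within a fixed number $M$ of cells of $(i,j)$ contribute nonzero $a_{i_1,i_2}$ on $\Omega$. For each such cell, $|\bar g^{\ell-1}_{i_1,i_2}|\le C\big((M+1)\sqrt2\,h_{\ell-1}\big)^{N+1}$. Then
$$\Big|h_\ell^{-2}\int_\Omega\sum a_{i_1,i_2}\bar g^{\ell-1}_{i_1,i_2}\Big|\le \max_{(i_1,i_2)}|\bar g^{\ell-1}_{i_1,i_2}|\; h_\ell^{-2}\int_\Omega\sum|a_{i_1,i_2}|.$$

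The main obstacle is the normalization in condition 1. As literally stated, it gives $\int_\Upsilon\sum|a|\le K$, which after multiplying by $h_\ell^{-2}$ would lose two powers of $h$. I will read condition 1 in the intended scale-invariant sense, namely as a bound on the averaged quantity $h_\ell^{-2}\int_\Omega\sum|a_{i_1,i_2}|\le K$. This is the natural reading, since for averaging-type coefficients $a_{i_1,i_2}$ is of size $O(1)$ pointwise with bounded support overlap. Equivalently, condition 1 is applied to the operator $\bar f\mapsto$ cell averages, which is exactly how the bound is used in \cite{levin}. I will also state the locality assumption on the support of the stencil explicitly, because condition 1 alone does not control far-away cells.

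With these two ingredients, the $g$-part is bounded by $K\,C\,((M+1)\sqrt2)^{N+1}h_{\ell-1}^{N+1}$. This bound is uniform in $i,j,k_1,k_2$, which yields the claimed $O(h_{\ell-1}^{N+1})$ estimate.
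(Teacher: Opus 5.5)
Your proof is correct and follows the same route as the paper's. Both subtract a Taylor polynomial $T\in\Pi_N$, cancel its contribution exactly through condition 3 of Definition~\ref{cellconsistent}, and control the remainder with condition 1.

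The two assumptions you state explicitly are the ones the paper's argument uses silently:
\begin{itemize}
\item It passes from $h_\ell^{-2}\int_{\Omega^{\ell}_{2i,2j}}\sum_{(i_1,i_2)\in I}|a_{i_1,i_2}|\,|\bar T^{\ell-1}_{i_1,i_2}-\bar f^{\ell-1}_{i_1,i_2}|$ to $K h_{\ell-1}^{N+1}$. That step uses your averaged bound $h_\ell^{-2}\int_{\Omega^{\ell}_{2i+k_1,2j+k_2}}\sum|a_{i_1,i_2}|\le K$, not the literal condition 1.
\item It asserts $|\bar T^{\ell-1}_{i_1,i_2}-\bar f^{\ell-1}_{i_1,i_2}|=O(h_{\ell-1}^{N+1})$ for all $(i_1,i_2)$. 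This holds only when the Taylor centre is near the target cell and the stencil is bounded.
\end{itemize}
So making these hypotheses explicit clarifies the paper's argument rather than departing from it.
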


\begin{proof}

From $f\in\mathcal{C}^{N+1}(\Omega)$ we know that there exist $(\xi_1,\xi_2)\in [0,1]^2$ such that
$$f(x,y)=T(x,y)+\sum_{s+t = N+1} \frac{1}{s!\,t!}
    \frac{\partial^{\,s+t} f}{\partial x^s \partial y^t}(\xi_1,\xi_2)\,x^sy^t,$$
with $T\in \Pi_N$. Note that, if $(i_1,i_2)\in \{1,\hdots,2^{{\color{black}\ell}-1}\}^2$
$$\left|\bar{T}^{{\color{black}\ell}-1}_{i_1,i_2}-\bar{f}^{{\color{black}\ell}-1}_{i_1,i_2}\right|=\left| h_{{\color{black}\ell}-1}^{-2}\int_{\Omega_{i_1,i_2}^{{\color{black}\ell}-1}}(T(x,y)-f(x,y))dxdy\right|=O(h_{{\color{black}\ell}-1}^{N+1}).$$
Without loss of generality $k_1=k_2=0$, similar for the rest of the cases, then
\begin{equation*}
\begin{split}
&\left|\bar f^{{\color{black}\ell}}_{2i,2j}-h_{{\color{black}\ell}}^{-2}\int_{\Omega^{{\color{black}\ell}}_{2i,2j}}\mathcal{R}_{{\color{black}\ell}-1}(\bar f^{{\color{black}\ell}-1})(x,y)dxdy\right|=\left|h_{{\color{black}\ell}}^{-2}\int_{\Omega^{{\color{black}\ell}}_{2i,2j}}\left(f(x,y)-\sum_{(i_1,i_2)\in I} a_{i_1,i_2}(x,y)\bar f^{{\color{black}\ell}-1}_{i_1,i_2}\right)dxdy\right|\\
&=\left|h_{{\color{black}\ell}}^{-2}\int_{\Omega_{2i,2j}}\left(T(x,y)+\sum_{s+t = N+1} \frac{1}{s!\,t!}
    \frac{\partial^{\,s+t} f}{\partial x^s \partial y^t}(\xi_1,\xi_2)\,x^sy^t-\sum_{(i_1,i_2)\in I} a_{i_1,i_2}(x,y)\bar f^{{\color{black}\ell}-1}_{i_1,i_2}\right)dxdy\right|\\
&=\left|h_{{\color{black}\ell}}^{-2}Rh_{{\color{black}\ell}}^{N+3}+h_{{\color{black}\ell}}^{-2}\int_{\Omega^{{\color{black}\ell}}_{2i,2j}}\left(\sum_{(i_1,i_2)\in I} a_{i_1,i_2}(x,y)\bar{T}^{{\color{black}\ell}-1}_{i_1,i_2}-\sum_{(i_1,i_2)\in I} a_{i_1,i_2}(x,y)\bar f^{{\color{black}\ell}-1}_{i_1,i_2}\right)dxdy\right|\\
&\leq h_{{\color{black}\ell}}^{-2}|R|h_{{\color{black}\ell}}^{N+3}+\left|h_{{\color{black}\ell}}^{-2}\int_{\Omega^{{\color{black}\ell}}_{2i,2j}}\left(\sum_{(i_1,i_2)\in I} a_{i_1,i_2}(x,y)(\bar{T}^{{\color{black}\ell}-1}_{i_1,i_2}-\bar f^{{\color{black}\ell}-1}_{i_1,i_2})\right)dxdy\right|\\
&\leq h_{{\color{black}\ell}}^{-2}|R|h_{{\color{black}\ell}}^{N+3}+h_{{\color{black}\ell}}^{-2}\int_{\Omega^{{\color{black}\ell}}_{2i,2j}}\left(\sum_{(i_1,i_2)\in I}\left| a_{i_1,i_2}(x,y)\right|\left|\bar{T}^{{\color{black}\ell}-1}_{i_1,i_2}-\bar f^{{\color{black}\ell}-1}_{i_1,i_2}\right|\right)dxdy\\
&\leq h_{{\color{black}\ell}}^{-2}|R|h_{{\color{black}\ell}}^{N+3}+Kh_{{\color{black}\ell}-1}^{N+1}\\
&=(2^{-(N+1)}|R|+K)h_{{\color{black}\ell}-1}^{N+1}.
\end{split}
\end{equation*}
{\color{black} Here, $R$ denotes the remainder term of the Taylor expansion, and $K$ is the non-negative constant from Definition \ref{cellconsistent}.}
\end{proof}

\begin{proposition}
Let $\mathcal{I}(x,y;F^{{\color{black}\ell}-1})$ be an interpolator which satisfies the conditions imposed in Eq. \eqref{interpolador} defined as
$$ \mathcal{I}(x,y;F^{{\color{black}\ell}-1})=\sum_{(i_1,i_2)\in I} \tilde a_{i_1,i_2}(x,y)\bar f^{{\color{black}\ell}-1}_{i_1,i_2},$$
(note that $F^{{\color{black}\ell}-1}\equiv \bar f^{{\color{black}\ell}-1}$) with $\tilde a_{i_1,i_2}\in \mathcal{C}^2([0,1]^2)$, and
$$\mathcal{I}(x,y;P^{{\color{black}\ell}-1})=P(x,y),\quad \forall\, P\in \Pi_{N+1},$$
then the operator
$$(\cR_{{\color{black}\ell}-1} \bar{f}^{{\color{black}\ell}-1})(x,y)=\frac{\partial^2}{\partial x\partial y}\mathcal{I}(x,y;F^{{\color{black}\ell}-1}),$$
is cell consistent.
\end{proposition}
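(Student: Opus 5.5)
The plan is to verify the three conditions of Definition \ref{cellconsistent} directly, with $a_{i_1,i_2}(x,y):=\frac{\partial^2}{\partial x\partial y}\tilde a_{i_1,i_2}(x,y)$. Since $\tilde a_{i_1,i_2}\in\mathcal{C}^2([0,1]^2)$, these coefficients are continuous on $[0,1]^2$ and hence belong to $L^1([0,1]^2)$. The representation $\cR_{\ell-1}\bar f^{\ell-1}=\sum_{(i_1,i_2)\in I}a_{i_1,i_2}\bar f^{\ell-1}_{i_1,i_2}$ then follows from linearity of differentiation.

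\textbf{Condition 1.} I will use that $I$ is finite and each $a_{i_1,i_2}$ is continuous on the compact set $[0,1]^2$. Hence $\sum_{(i_1,i_2)\in I}|a_{i_1,i_2}|$ is bounded there, and $\int_\Upsilon\sum|a_{i_1,i_2}|\le\int_{[0,1]^2}\sum|a_{i_1,i_2}|=:K$ for every compact $\Upsilon\subseteq[0,1]^2$.

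\textbf{Condition 2.} The key tool is the two-dimensional fundamental theorem of calculus. For $G\in\mathcal{C}^2$,
$$\int_{a}^{b}\int_{c}^{d}\frac{\partial^2 G}{\partial x\partial y}\,dy\,dx=G(b,d)-G(a,d)-G(b,c)+G(a,c).$$
I apply this with $G=\mathcal{I}(\cdot,\cdot;F^{\ell-1})$ on $\Omega^{\ell-1}_{i,j}$. The interpolation conditions \eqref{interpolador} replace the four corner values by $F^{\ell-1}$ at the nodes, and \eqref{fijcell} shows that the resulting combination equals $h_{\ell-1}^2\bar f^{\ell-1}_{i,j}$. The corner nodes with index $0$ need separate care. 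The interpolation conditions are stated only for $i,j\ge1$, but $F$ vanishes on $x=0$ and $y=0$, and I will assume (or argue from the construction) that $\mathcal{I}$ reproduces these zero boundary values too, since $F^{\ell-1}_{0,j}=F^{\ell-1}_{i,0}=0$ are part of the data.

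\textbf{Condition 3.} This is where the polynomial reproduction hypothesis enters. Take $p\in\Pi_N$ and let $P(x,y)=\int_0^x\int_0^y p$, so that $P\in\Pi_{N+2}$. Here I hit the main obstacle: $P$ has total degree $N+2$, not $N+1$, so the hypothesis $\mathcal{I}(\cdot;P^{\ell-1})=P$ for $P\in\Pi_{N+1}$ does not obviously apply. I expect the intended reading is that reproduction holds for the primitives of $\Pi_N$; equivalently, $\Pi_{N+1}$ should be read as the space of polynomials of degree at most $N+1$ in each variable, or the degree count is meant in that sense. I would state explicitly that the hypothesis is used in the form ``$\mathcal{I}$ reproduces $P=\int\int p$ for all $p\in\Pi_N$''. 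This is harmless because such $P$ is a combination of monomials $x^{s+1}y^{t+1}$ with $s+t\le N$.

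Granting this, $\cR_{\ell-1}\bar p^{\ell-1}=\partial_{xy}P=p$ identically. Integrating over any fine cell $\Omega^\ell_{2i+k_1,2j+k_2}$ and dividing by $h_\ell^2$ then gives exactly $\bar p^\ell_{2i+k_1,2j+k_2}$.

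Finally, I will note that $\mathcal{I}$ depends on the data only through $F^{\ell-1}$, which \eqref{fijcell2} builds linearly from $\bar p^{\ell-1}$. This justifies writing $\mathcal{I}(\cdot;P^{\ell-1})$ with the same coefficients $\tilde a_{i_1,i_2}$, which completes the verification.
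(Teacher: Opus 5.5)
Your proposal is correct and follows the paper's proof. Conditions 1 and 2 are treated as immediate; your remark that cells touching $x=0$ or $y=0$ need $\mathcal{I}$ to also interpolate the zero values of $F$ there is a fair clarification of what the paper simply calls direct. Condition 3 comes from the primitive $P(x,y)=\int_0^x\int_0^y p(s,t)\,ds\,dt$ plus polynomial reproduction. Your pointwise identity $\mathcal{R}_{\ell-1}\bar p^{\ell-1}=\frac{\partial^2 P}{\partial x\partial y}=p$ is only a shortcut for the paper's evaluation of the four corner values of $\mathcal{I}$ on the fine cell.

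Your objection on degrees is also well founded. With $\Pi_N$ defined by total degree, the paper's claim $P\in\Pi_{N+1}$ is false (already $p=1$ gives $P=xy$); only $P\in\Pi_{N+2}$ holds. The reproduction hypothesis must therefore be read as covering these primitives, for example all polynomials of degree at most $N+1$ in each variable, exactly as you propose.
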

\begin{proof}
The first and second conditions of the Def. \eqref{cellconsistent} are direct. We prove the third condition with $k_1=k_2=0$ (similar for the rest of the cases), thus let $p\in \Pi_N$ and
$$P(x,y)=\int_0^x\int_0^yp(s,t)dsdt$$
then $P\in\Pi_{N+1}$ and it satisfies
\begin{equation*}
\begin{split}
h_{{\color{black}\ell}}^{-2}\int\limits_{\Omega^{{\color{black}\ell}}_{2i,2j}}\mathcal{R}_{{\color{black}\ell}-1}(\bar p^{{\color{black}\ell}-1})(x,y)dxdy&=h_{{\color{black}\ell}}^{-2}\int_{\Omega^{{\color{black}\ell}}_{2i,2j}}\frac{\partial^2}{\partial x\partial y}\mathcal{I}(x,y;P^{{\color{black}\ell}-1})dxdy\\
&=h^{-2}_{{\color{black}\ell}}(\mathcal{I}(x_{2i},y_{2j};P^{{\color{black}\ell}-1})-\mathcal{I}(x_{2i-1},y_{2j};P^{{\color{black}\ell}-1})-\mathcal{I}(x_{2i},y_{2j-1};P^{{\color{black}\ell}-1})+\mathcal{I}(x_{2i-1},y_{2j-1};P^{{\color{black}\ell}-1})) \\
&=h^{-2}_{{\color{black}\ell}}(P(x_{2i},y_{2j})-P(x_{2i-1},y_{2j})-P(x_{2i},y_{2j-1})+P(x_{2i-1},y_{2j-1}))\\
&=h^{-2}_{{\color{black}\ell}}\int_{x_{2i-1}}^{x_{2i}}\int_{y_{2j-1}}^{y_{2j}}p(s,t)dsdt\\
&=\bar p^{{\color{black}\ell}}_{2i,2j}.
\end{split}
\end{equation*}

\end{proof}

The relation between the reconstruction operator for point-value and cell-average data given in Eq. \eqref{opreconscell} allows us to use the existing classical methods and the new one published in \cite{MRSY24}. Also, we could use other techniques as, for example, radial basis functions (RBF), partition of unity method or nonlinear partition of unity method (PUM) (see e.g. \cite{cavo16,FASSHAUER, wendland}) but we focus our objective on applying the novel progressive WENO-$2r$ scheme in the context of digital image compression. For this purpose, the following section is devoted to reviewing the scheme, concentrating on two variables, although its extension  to higher-dimensional settings can be achieved without significant {\color{black} difficulties}.

\section{Bivariate linear Lagrange interpolation and non-linear WENO method in Cartesian grids}\label{sec:multivariatelinearandwenoclassic}

This section provides a concise overview of the bivariate Lagrange interpolation problem in the context of data distributed on Cartesian grids, and develops the bivariate WENO scheme introduced in \cite{arandigamuletrenau}. In particular, we focus on the interpolation in $[0,1]^2$. The essential components required for extending the new WENO-$2r$
algorithm, presented in Section \ref{weno2d}, to the multivariate setting are also presented.

\subsection{Bivariate linear interpolation}\label{linealnd}
We suppose an equally spaced grid in $[0,1]^2$ denoted by $\{(ih_{{\color{black}\ell}},jh_{{\color{black}\ell}})\}_{i,j=0}^{2^{{\color{black}\ell}}}$ with $h_{{\color{black}\ell}}=2^{-{\color{black}\ell}}$, and an unknown function
 $f:[0,1]^2 \to \mathbb{R}$, and consider our data as the evaluation in the points of the Cartesian grid of this function
$$f^{{\color{black}\ell}}_{i,j}=f(ih_{{\color{black}\ell}},jh_{{\color{black}\ell}}), \quad 0\leq i,j\leq 2^{{\color{black}\ell}}.$$
Let $r\in\mathbb{N}$ and let $(i_0,j_0)\in\{r,2^{{\color{black}\ell}}-r\}^2$ denote the reference index about which the approximation is centered, and define the centered stencil:
\begin{equation}\label{eq1stencil}
\begin{split}
\mathcal{S}_{\mathbf{0}}^{2r}&=\{(i_0-r)h_{{\color{black}\ell}},\hdots,(i_0+r-1)h_{{\color{black}\ell}}\}\times\{(j_0-r)h_{{\color{black}\ell}},\hdots,(j_0+r-1)h_{{\color{black}\ell}}\}.
\end{split}
\end{equation} 
{\color{black} Then, the problem consists in determining a bivariate polynomial of degree $2r-1$ in each variable (i.e., of total degree $4r-2$), constructed as a tensor product of univariate Lagrange polynomials, such that:}
$$p(\xx)=f(\xx), \quad \, \forall\, \xx \in \mathcal{S}_{\mathbf{0}}^{2r},$$
whose unique solution is
$$p_{\mathbf{0}}^{2r-1}(x,y)=\sum_{i=i_0-r}^{i_0+r-1}\sum_{j=j_0-r}^{j_0+r-1}{\color{black}f(ih,jh)}L_{i}(x) L_{j}(y),$$
being the Lagrange base of polynomials:
$$L_{\kappa}(x)=\prod_{k=\kappa_0-r,k\neq \kappa}^{\kappa_0+r-1}\left(\frac{x-kh_{{\color{black}\ell}}}{\kappa h_{{\color{black}\ell}}-kh_{{\color{black}\ell}}}\right), \quad \kappa=\kappa_0-r,\hdots,\kappa_0+r-1,\quad \kappa=i,j. $$
In order to calculate the error, we use the result proved in \cite{arandigamuletrenau}: If $\xe=(x^*,y^*)\in [0,1]^2$, and $f\in \mathcal{C}^{4r}(\mathbb{R}^2)$ then the interpolation error is:
\begin{equation}\label{errorfuncion}
E(\xe)=f(\xe)-p_{\mathbf{0}}^{2r-1}(\xe)= O(h_{{\color{black}\ell}}^{2r}).
\end{equation}

\subsection{Bivariate WENO interpolation in Cartesian grids}\label{wenoclasicond}

In this subsection, we revisit the WENO interpolant introduced in \cite{arandigamuletrenau}; the key idea is to employ the same ingredients that are typically used in the construction of the WENO scheme in one dimension.

We consider the point where we want to approximate the function
$$\xe=(x^*,y^*)\in\{(i_0-1/2,j_0-1/2)h_{{\color{black}\ell}},(i_0,j_0-1/2)h_{{\color{black}\ell}},(i_0-1/2,j_0)h_{{\color{black}\ell}}\} \subset [0,1]^2,$$
we design a convex combination of interpolants of lower degree, $r$, {\color{black} let $\mathbf{k}=(k_1,k_2)$ and define}
$$p_0^{2r-1}(\xe)=\sum_{{\color{black}\mathbf{k}}\in \{0,1,\hdots,r-1\}^2} C^r_{{\color{black} \mathbf{k}}}p_{{\color{black}\mathbf{k}}}^r(\xe),$$
where $C^r_{{\color{black}\mathbf{k}}}=C^r_{k_1}C^r_{k_2}$ are the optimal weights whose explicit formula is showed in \cite{ABM}:
\begin{equation}\label{opt_w}
{C}_{k}^r=\frac{1}{2^{2r-1}}
\binom{2r}{2k+1}
,  \quad k=0,\cdots, r-1.
\end{equation}
The polynomials $p_{{\color{black} \mathbf{k}}}^r$ interpolate at the nodes
\begin{equation*}
\begin{split}
\mathcal{S}_{{\color{black}\mathbf{k}}}^{r}&=\{(i_0+k_1-r)h_{{\color{black}\ell}},\hdots,(i_0+k_1+r)h_{{\color{black}\ell}}\}\times\{(j_0+k_2-r)h_{{\color{black}\ell}},\hdots,(j_0+k_2+r)h_{{\color{black}\ell}}\}=\mathcal{S}_{k_1}^r \times \mathcal{S}_{k_2}^r.
\end{split}
\end{equation*}
The optimal weights are subsequently replaced by non-linear ones, as defined by the following expression \cite{MRSY24}
\begin{equation}\label{omegand}
\omega^r_{\mathbf{k}}(\xe)=\frac{\alpha^r_{\mathbf{k}}(\xe)}{\sum_{\mathbf{l}\in\{0,\hdots,r-1\}^2}\alpha^r_{\mathbf{l}}(\xe)},\,\,\text{with}\,\,
\alpha^r_{\mathbf{k}}(\xe)=\frac{C^r_{\mathbf{k}}(\xe)}{(\epsilon+I^r_{\mathbf{k}})^t},
\end{equation}
where $I^r_{\mathbf{k}}$ are the smoothness indicators, subject to conditions:
\begin{enumerate}[label={\bfseries P\arabic*}]
\item\label{P1sm} The order of the smoothness indicator in regions free of discontinuities is $h^2$, i.e.,
$$I^r_{\mathbf{k}}=O(h_{{\color{black}\ell}}^2) \,\, \text{if}\,\, f \,\, \text{is smooth in } \,\, \mathcal{S}^r_{\mathbf{k}}.$$
\item\label{P2sm}The distance between two smoothness indicators in regions free of discontinuities is $h_{{\color{black}\ell}}^{r+1}$, i.e.,
 let
  $\mathbf{k}=(k_1,k_2)$, and $\mathbf{k}'=(k'_1,k'_2)$
 be such that both $\mathcal{S}^r_{\mathbf{k}}$ and $\mathcal{S}^r_{\mathbf{k}'}$ are free of discontinuities, then
$$I^r_{\mathbf{k}}-I^r_{\mathbf{k}'}=O(h_{{\color{black}\ell}}^{r+1}).$$
\item\label{P3sm} When the stencil $\mathcal{S}^r_{\mathbf{k}}$ is affected by a discontinuity,
 then:
$$I^r_{\mathbf{k}} \nrightarrow 0 \,\, \text{as}\,\, h_{{\color{black}\ell}}\to 0 \,\,(\equiv {{\color{black}\ell}}\to\infty).$$
\end{enumerate}



The smoothness indicators to be designed are those adapted from \cite{MRSY24} to the context of cell-average data available in our setting.
The parameters $\epsilon$ and $t$ are chosen to avoid divisions by zero ant to get the maximum order at smooth zones respectively. In \cite{ABM}, the next theorem is proved with fixed parameters.

\begin{theorem}
Let $\xe\in \{(i^{{\color{black}\ell}}_0-1/2,j^{{\color{black}\ell}}_0-1/2)h_{{\color{black}\ell}},(i^{{\color{black}\ell}}_0,j^{{\color{black}\ell}}_0-1/2)h_{{\color{black}\ell}},(i^{{\color{black}\ell}}_0-1/2,j^{{\color{black}\ell}}_0)h_{{\color{black}\ell}}\}$, the bivariate WENO interpolant
\begin{equation}
\mathcal{I}^{2r-1}(\xe;f)=\sum_{\mathbf{k}\in \{0,1,\hdots,r-1\}^2}\omega^r_{\mathbf{k}}p_{\mathbf{k}}^r(\xe)
\end{equation}
with $\omega^r_{\mathbf{k}}$, $\mathbf{k}\in \{0,1,\hdots,r-1\}^2$
defined in Eq. \eqref{omegand}, with smoothness indicator
$I^r_{\mathbf{k}}$, $\mathbf{k}\in \{0,1,\hdots,r-1\}^2$ \dioni{}{fulfilling}
\ref{P1sm}, \ref{P2sm}, \ref{P3sm}; $\epsilon=h^2$ and
$t=\frac{1}{2}(r+1)$ satisfies:
$$f(\xe)-\mathcal{I}^{2r-1}(\xe;f)=
\begin{cases}
O(h^{2r}), & \text{at smooth regions,}\\
O(h^{r+1}), & \text{if, at least, one stencil lies in a smooth region.}
\end{cases}
$$
\end{theorem}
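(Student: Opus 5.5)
Both cases follow from one decomposition. Because $\sum_{\mathbf{k}}C^r_{\mathbf{k}}=1$ and $\sum_{\mathbf{k}}\omega^r_{\mathbf{k}}=1$, and because the optimal combination reproduces the large-stencil interpolant $p^{2r-1}_{\mathbf{0}}$ at the given point $\xe$, we can write
\begin{equation*}
f(\xe)-\mathcal{I}^{2r-1}(\xe;f)=\bigl(f(\xe)-p^{2r-1}_{\mathbf{0}}(\xe)\bigr)+\sum_{\mathbf{k}}\bigl(C^r_{\mathbf{k}}-\omega^r_{\mathbf{k}}\bigr)\bigl(p^r_{\mathbf{k}}(\xe)-f(\xe)\bigr).
\end{equation*}
By Eq.~\eqref{errorfuncion}, the first term is $O(h^{2r})$ in smooth regions. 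In the same regions, the univariate tensor-product error estimate (the argument of \cite{arandigamuletrenau} applied to the smaller stencils) gives $p^r_{\mathbf{k}}(\xe)-f(\xe)=O(h^{r})$. So for the smooth case it is enough to prove $\omega^r_{\mathbf{k}}-C^r_{\mathbf{k}}=O(h^{r})$.

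To get that weight estimate, fix a reference index $\mathbf{k}_0$ and put $I=I^r_{\mathbf{k}_0}$. By \ref{P2sm}, $I^r_{\mathbf{k}}=I+O(h^{r+1})$ for every $\mathbf{k}$. By \ref{P1sm} and $\epsilon=h^2$, we have $\epsilon+I\ge h^2$ and $\epsilon+I=O(h^2)$. Hence
\begin{equation*}
(\epsilon+I^r_{\mathbf{k}})^{-t}=(\epsilon+I)^{-t}\bigl(1+O(h^{r+1}/h^{2})\bigr)=(\epsilon+I)^{-t}\bigl(1+O(h^{r-1})\bigr).
\end{equation*}
This gives only $\omega^r_{\mathbf{k}}-C^r_{\mathbf{k}}=O(h^{r-1})$ and an error of $O(h^{2r-1})$. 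That is the main obstacle: the bound falls short by one power of $h$.

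I would close the gap in one of two ways. The first is to sharpen the expansion of $p^r_{\mathbf{k}}(\xe)-f(\xe)$: its leading $O(h^r)$ part has a common factor across $\mathbf{k}$ (a derivative of $f$ times a constant depending on $\mathbf{k}$), so its pairing with $C^r_{\mathbf{k}}-\omega^r_{\mathbf{k}}$ should be controlled more finely. The second is to exploit the fact that the $O(h^{r+1})$ differences in \ref{P2sm} are, to leading order, a common term. In the end the result is to be cited from \cite{ABM} with exactly these parameters, and $t=\frac12(r+1)$ is the exponent that balances the powers there. I would follow their bookkeeping: write $\omega^r_{\mathbf{k}}=C^r_{\mathbf{k}}\bigl(1+O(h^{r-1})\bigr)$ normalized against $\sum_{\mathbf{l}}C^r_{\mathbf{l}}\bigl(1+O(h^{r-1})\bigr)$, and then check that the combined product reaches $h^{2r}$.

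For the second case, let $\mathcal{K}$ be the set of indices whose stencils are smooth. For $\mathbf{k}\notin\mathcal{K}$, \ref{P3sm} says $I^r_{\mathbf{k}}$ is bounded below, so $\alpha^r_{\mathbf{k}}=O(1)$. For $\mathbf{k}\in\mathcal{K}$, \ref{P1sm} gives $\alpha^r_{\mathbf{k}}\gtrsim h^{-2t}=h^{-(r+1)}$. Therefore $\omega^r_{\mathbf{k}}=O(h^{r+1})$ for $\mathbf{k}\notin\mathcal{K}$, and the weights on $\mathcal{K}$ sum to $1-O(h^{r+1})$. Now write
\begin{equation*}
f(\xe)-\mathcal{I}^{2r-1}(\xe;f)=\sum_{\mathbf{k}\in\mathcal{K}}\omega^r_{\mathbf{k}}\bigl(f(\xe)-p^r_{\mathbf{k}}(\xe)\bigr)+\sum_{\mathbf{k}\notin\mathcal{K}}\omega^r_{\mathbf{k}}\bigl(f(\xe)-p^r_{\mathbf{k}}(\xe)\bigr).
\end{equation*}
The second sum is $O(h^{r+1})\cdot O(1)$. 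In the first sum each factor $f(\xe)-p^r_{\mathbf{k}}(\xe)$ is the error of interpolating on a smooth tensor stencil with $r+1$ nodes per direction. That error is $O(h^{r+1})$, and using this count rather than $O(h^r)$ is what yields the stated $O(h^{r+1})$.
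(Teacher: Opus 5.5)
Your decomposition is the standard one, and your second case is fine. The paper gives no argument of its own; it attributes the theorem to \cite{ABM}. The gap is in your smooth case, and you created it yourself. You bound $p^r_{\mathbf{k}}(\xe)-f(\xe)$ by $O(h^{r})$. But each $p^r_{\mathbf{k}}$ has degree $r$ in each variable and interpolates on $r+1$ nodes per direction; the union of the $r^2$ sub-stencils is the $2r\times 2r$ stencil of $p^{2r-1}_{\mathbf{0}}$. So for smooth $f$ the tensor-product error is $O(h^{r+1})$, which is exactly the count you use in your second case.

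With that count, your weight estimate $\omega^r_{\mathbf{k}}-C^r_{\mathbf{k}}=O(h^{r-1})$ finishes the smooth case at once. That estimate is correct, and it is exactly what P2 together with $\epsilon=h^2$ gives. You get $\sum_{\mathbf{k}}(C^r_{\mathbf{k}}-\omega^r_{\mathbf{k}})(p^r_{\mathbf{k}}(\xe)-f(\xe))=O(h^{r-1})\cdot O(h^{r+1})=O(h^{2r})$. As written, however, you stop at $O(h^{2r-1})$. You then replace the missing step with unproved remedies and a deferral to the cited paper's ``bookkeeping'', which is not a proof step. Your first remedy also rests on a false premise: there is no $O(h^{r})$ leading term in $p^r_{\mathbf{k}}(\xe)-f(\xe)$ to cancel.

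Your remark that $t=\frac12(r+1)$ ``balances the powers'' in the smooth case is also misplaced. For any fixed $t>0$, $(1+O(h^{r-1}))^{-t}=1+O(h^{r-1})$, so $t$ plays no role there. Its value matters only in the second case. There it gives $\alpha^r_{\mathbf{k}}/\alpha^r_{\mathbf{k}'}=O(h^{2t})=O(h^{r+1})$ for a contaminated $\mathbf{k}$ against a smooth $\mathbf{k}'$, and you use this correctly.

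Two minor points. The bound $\epsilon+I\ge h^2$ comes from $I\ge 0$, not from P1. And you read P3 as ``$I^r_{\mathbf{k}}\ge c>0$ for all small $h$''. That is stronger than the literal ``$I^r_{\mathbf{k}}\nrightarrow 0$'', but it is what your argument needs and is the intended meaning.
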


In the following section, we review the progressive WENO method presented in \cite{MRSY24}, focusing on the case of two variables and performing interpolation at the midpoints of the intervals.

\section{The new progressive bivariate WENO method}\label{weno2d}

The purpose of this approach is to attain the highest possible order of accuracy in situations where a discontinuity intersects the largest stencil. To this end, we employ the two-dimensional version of the Aitken–Neville algorithm \cite{gascaaitkenclasica}. The procedure begins with a polynomial of degree $2r-1$, which is decomposed into $4$ polynomials of degree $2r-2$. This yields linear polynomials as weights, later replaced by non-linear ones that depend on the position of the discontinuity. The process is then repeated: each of the $4$ polynomials of degree $2r-2$ is further decomposed into polynomials of degree $2r-3$, and so forth. At every stage, the non-linear weights identify the stencils unaffected by the discontinuity, which are subsequently employed for the approximation. For clarity, we restrict the presentation in this section to the case $r = 3$, since the extension to general $r$ follows analogously.

Therefore, we consider a uniform  grid in $[0,1]^2$
defined by $\{(x^{{\color{black}\ell}}_i,y^{{\color{black}\ell}}_j)=(ih_{{\color{black}\ell}},jh_{{\color{black}\ell}})\}_{i,j=0}^{2^{{\color{black}\ell}}}$, with $h_{{\color{black}\ell}}=2^{-{\color{black}\ell}}$ and $(i_0,j_0)\in \mathbb{N}^2$ such that $0\leq i_0-2,j_0-2$ and $i_0+1,j_0+1\leq 2^{{\color{black}\ell}}$. We establish the largest stencil
$$\mathcal{S}^{6}_{(0,0)}=\{x^{{\color{black}\ell}}_{i_0-3},x^{{\color{black}\ell}}_{i_0-2},x^{{\color{black}\ell}}_{i_0-1},x^{{\color{black}\ell}}_{i_0},x^{{\color{black}\ell}}_{i_0+1},x^{{\color{black}\ell}}_{i_0+2}\}\times\{y^{{\color{black}\ell}}_{j_0-3},y^{{\color{black}\ell}}_{j_0-2},y^{{\color{black}\ell}}_{j_0-1},y^{{\color{black}\ell}}_{j_0},y^{{\color{black}\ell}}_{j_0+1},y^{{\color{black}\ell}}_{j_0+2}\}.$$
We calculate the evaluation at the point $\xe\in\{(i_0-1/2,j_0-1/2)h_{{\color{black}\ell}},(i_0,j_0-1/2)h_{{\color{black}\ell}},(i_0-1/2,j_0)h_{{\color{black}\ell}}\} $ of the polynomials:
$$p^{3}_{\mathbf{j}_1}(\xe), \quad  \mathbf{j}_1=(j^{(1)}_1,j^{(1)}_2) \in \{0,1,2\}^2,$$
being the stencils used for constructing each polynomial $\mathcal{S}^{3}_{(l_1,l_2)}$, with $l_j=0,1,2$, and $j=1,2$, see Figure \ref{wenonormal}, {\color{black} i.e., to compute $p^{3}_{(0,0)}$ we will use
the points $$\mathcal{S}^{4}_{(0,0)}=\{x^{{\ell}}_{i_0-3},x^{{\ell}}_{i_0-2},x^{{\ell}}_{i_0-1},x^{{\ell}}_{i_0}\}\times\{y^{{\ell}}_{j_0-3},y^{{\ell}}_{j_0-2},y^{{\ell}}_{j_0-1},y^{{\ell}}_{j_0}\},$$
for $p^{3}_{(1,0)}$
$$\mathcal{S}^{4}_{(1,0)}=\{x^{{\ell}}_{i_0-2},x^{{\ell}}_{i_0-1},x^{{\ell}}_{i_0},x^{{\ell}}_{i_0+1}\}\times\{y^{{\ell}}_{j_0-3},y^{{\ell}}_{j_0-2},y^{{\ell}}_{j_0-1},y^{{\ell}}_{j_0}\},$$
for $p^{3}_{(0,1)}$
$$\mathcal{S}^{4}_{(0,1)}=\{x^{{\ell}}_{i_0-3},x^{{\ell}}_{i_0-2},x^{{\ell}}_{i_0-1},x^{{\ell}}_{i_0}\}\times\{y^{{\ell}}_{j_0-2},y^{{\ell}}_{j_0-1},y^{{\ell}}_{j_0},y^{{\ell}}_{j_0+1}\},$$
for $p^{3}_{(1,1)}$
$$\mathcal{S}^{4}_{(1,1)}=\{x^{{\ell}}_{i_0-2},x^{{\ell}}_{i_0-1},x^{{\ell}}_{i_0},x^{{\ell}}_{i_0+1}\}\times\{y^{{\ell}}_{j_0-2},y^{{\ell}}_{j_0-1},y^{{\ell}}_{j_0},y^{{\ell}}_{j_0+1}\},$$
and similarly for $p^3_{(l_1,l_2)}$ with $l_j=1,2$ and $j=1,2$. We remark that the stencil used to calculate $p^4_{(0,0)}$ is the union of the stencils above mentioned,  $\mathcal{S}^{4}_{(0,0)}$, $\mathcal{S}^{4}_{(1,0)}$, $\mathcal{S}^{4}_{(0,1)}$, ,$\mathcal{S}^{4}_{(1,1)}$.
 }
\begin{figure}[!hbtp]
\usetikzlibrary{arrows}
\begin{center}
  \input{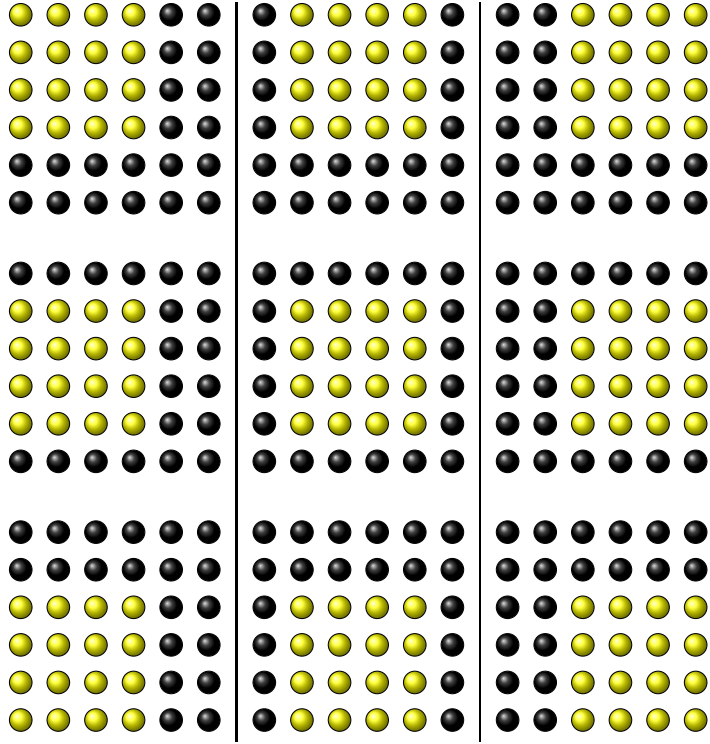}
\end{center}
\caption{Stencils used to get $p^{3}_{\mathbf{j}_1}(\xe), \,  \mathbf{j}_1\in \{0,1,2\}^2$. {\color{black} Each distinct color identifies one of the nine candidate $4 \times 4$ stencils spatially shifted within the global $6 \times 6$ domain, following the classical bivariate WENO framework \cite{arandigamuletrenau}.}}\label{wenonormal}
\end{figure} 
Therefore, by means of the Aitken-Neville formula, we represent $p^4_{\mathbf{j}_0}$, $\mathbf{j}_0=(j^{(0)}_1,j^{(0)}_2) \in\{0,1\}^2$, whose stencils are displayed in Figure \ref{progresivo1}, as the sum of the evaluations of polynomials of degree 3 (Figure \ref{progresive2})
\begin{equation}\label{eqp4primerejemplo}
\begin{split}
&p^{4}_{(0,0)}(\xe)=\sum_{\mathbf{j}_1\in\{0,1\}\times\{0,1\}}C_{(0,0),\mathbf{j}_1}^3(\xe)p^{3}_{\mathbf{j}_1}(\xe),\quad
p^{4}_{(1,0)}(\xe)=\sum_{\mathbf{j}_1\in\{1,2\}\times\{0,1\}}C_{(1,0),\mathbf{j}_1}^3(\xe)p^{3}_{\mathbf{j}_1}(\xe),\\
&p^{4}_{(0,1)}(\xe)=\sum_{\mathbf{j}_1\in\{0,1\}\times\{1,2\}}C_{(0,1),\mathbf{j}_1}^3(\xe)p^{3}_{\mathbf{j}_1}(\xe),\quad p^{4}_{(1,1)}(\xe)=\sum_{\mathbf{j}_1\in\{1,2\}\times\{1,2\}}C_{(1,1),\mathbf{j}_1}^3(\xe)p^{3}_{\mathbf{j}_1}(\xe),\\
\end{split}
\end{equation}
with
$$C^3_{\mathbf{j}_0,\mathbf{j}_1}=C^3_{(j^{(0)}_1,j^{(0)}_2),(j^{(1)}_1,j^{(1)}_2)}(\xe)=C^3_{j^{(0)}_1,j^{(1)}_1}(x^*)C^3_{j^{(0)}_2,j^{(1)}_2}(y^*), \,\, \mathbf{j}_0\in\{0,1\}^2, \,\,
\mathbf{j}_1\in \mathbf{j}_0+\{0,1\}^2,$$
where $C^3_{k,k_1}$, $k=0,1$, $k_1=k+\{0,1\}$ are (see \cite{ARSY20})
\begin{equation}\label{pesos3}
C^3_{0,0}(x^*)=\frac{x^*-x^{{\color{black}\ell}}_{i_0+1}}{x^{{\color{black}\ell}}_{i_0-3}-x^{{\color{black}\ell}}_{i_0+1}}, \quad C^3_{0,1}(x^*)=1-C^3_{0,0}(x^*), \quad C^3_{1,1}(x^*)=\frac{x^*-x^{{\color{black}\ell}}_{i_0+2}}{x^{{\color{black}\ell}}_{i_0-2}-{\color{black}x}^{{\color{black}\ell}}_{i_0+2}}, \quad C^3_{1,2}(x^*)=1-C^3_{1,1}(x^*).
\end{equation}
Now, we replace in Eq. \eqref{eqp4primerejemplo} the linear-weights for non-linear ones:
\begin{equation*}
\tilde{\omega}_{\mathbf{j}_0,\mathbf{j}_1}^3(\xe)=\frac{\tilde{\alpha}_{\mathbf{j}_0,\mathbf{j}_1}^3(\xe)}{\sum_{\mathbf{l}\in\mathbf{j}_0+\{0,1\}^2}\tilde{\alpha}_{\mathbf{j}_0,\mathbf{l}}^3(\xe)},\,\,  \textrm{ where }\,\, \tilde{\alpha}_{\mathbf{j}_0,\mathbf{j}_1}^3(\xe)=\frac{C_{\mathbf{j}_0,\mathbf{j}_1}^3(\xe)}{(\epsilon+I_{\mathbf{j}_0,\mathbf{j}_1}^3)^t},\quad \mathbf{j}_0\in\{0,1\}^2, \, \mathbf{j}_1\in\mathbf{j}_0+\{0,1\}^2,
\end{equation*}
and
$$I_{\mathbf{j}_0,\mathbf{j}_1}^3=I_{\mathbf{j}_1}^3,\,\, \mathbf{j}_0\in\{0,1\}^2, \,\,
\mathbf{j}_1\in \mathbf{j}_0+\{0,1\}^2,$$
being $I^3_{\mathbf{j}_1}$ smoothness indicators which satisfy the properties \ref{P1sm}, \ref{P2sm} and \ref{P3sm}.
We obtain the approximation:
\begin{equation}\label{eqp4primerejemplonolineal}
\begin{split}
&\tilde{p}^{4}_{(0,0)}(\xe)=\sum_{\mathbf{j}_1\in\{0,1\}\times\{0,1\}}\tilde{\omega}_{(0,0),\mathbf{j}_1}^3(\xe)p^{3}_{\mathbf{j}_1}(\xe),\quad
\tilde{p}^{4}_{(1,0)}(\xe)=\sum_{\mathbf{j}_1\in\{1,2\}\times\{0,1\}}\tilde{\omega}_{(1,0),\mathbf{j}_1}^3(\xe)p^{3}_{\mathbf{j}_1}(\xe),\\
&\tilde{p}^{4}_{(0,1)}(\xe)=\sum_{\mathbf{j}_1\in\{0,1\}\times\{1,2\}}\tilde{\omega}_{(0,1),\mathbf{j}_1}^3(\xe)p^{3}_{\mathbf{j}_1}(\xe),\quad \tilde{p}^{4}_{(1,1)}(\xe)=\sum_{\mathbf{j}_1\in\{1,2\}\times\{1,2\}}\tilde{\omega}_{(1,1),\mathbf{j}_1}^3(\xe)p^{3}_{\mathbf{j}_1}(\xe).\\
\end{split}
\end{equation}

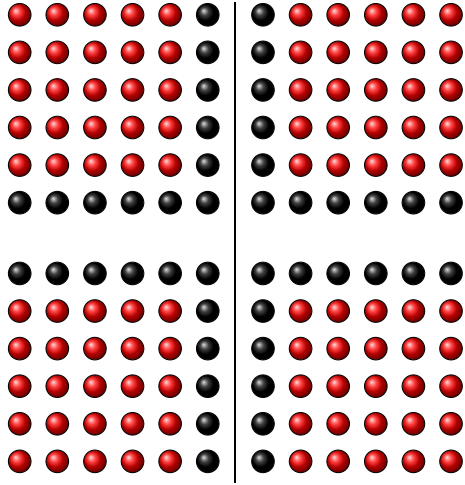
\begin{figure}[H]
\usetikzlibrary{arrows}
\begin{center}
  \begin{tabular}{c|c}
\begin{tikzpicture}
\draw[ball color=black] (0  ,0) circle (.15);
\draw[ball color=black] (0.5,0) circle (.15);
\draw[ball color=black] (1  ,0) circle (.15);
\draw[ball color=black] (1.5,0) circle (.15);
\draw[ball color=black]  (2  ,0) circle (.15);
\draw[ball color=black]  (2.5,0) circle (.15);

\draw[ball color=red] (0  ,0.5) circle (.15);
\draw[ball color=red] (0.5,0.5) circle (.15);
\draw[ball color=red] (1  ,0.5) circle (.15);
\draw[ball color=red] (1.5,0.5) circle (.15);
\draw[ball color=red]  (2  ,0.5) circle (.15);
\draw[ball color=black]  (2.5,0.5) circle (.15);

\draw[ball color=red](0  ,1) circle (.15);
\draw[ball color=red](0.5,1) circle (.15);
\draw[ball color=red] (1  ,1) circle (.15);
\draw[ball color=red] (1.5,1) circle (.15);
\draw[ball color=red] (2  ,1) circle (.15);
\draw[ball color=black] (2.5,1) circle (.15);

\draw[ball color=red](0  ,1.5) circle (.15);
\draw[ball color=red](0.5,1.5) circle (.15);
\draw[ball color=red] (1  ,1.5) circle (.15);
\draw[ball color=red] (1.5,1.5) circle (.15);
\draw[ball color=red] (2  ,1.5) circle (.15);
\draw[ball color=black] (2.5,1.5) circle (.15);

\draw[ball color=red] (0  ,2) circle (.15);
\draw[ball color=red] (0.5,2) circle (.15);
\draw[ball color=red] (1  ,2) circle (.15);
\draw[ball color=red] (1.5,2) circle (.15);
\draw[ball color=red] (2  ,2) circle (.15);
\draw[ball color=black] (2.5,2) circle (.15);

\draw[ball color=red] (0  ,2.5) circle (.15);
\draw[ball color=red] (0.5,2.5) circle (.15);
\draw[ball color=red] (1  ,2.5) circle (.15);
\draw[ball color=red] (1.5,2.5) circle (.15);
\draw[ball color=red] (2  ,2.5) circle (.15);
\draw[ball color=black] (2.5,2.5) circle (.15);
\end{tikzpicture}
&
\begin{tikzpicture}
\draw[ball color=black] (0  ,0) circle (.15);
\draw[ball color=black] (0.5,0) circle (.15);
\draw[ball color=black] (1  ,0) circle (.15);
\draw[ball color=black] (1.5,0) circle (.15);
\draw[ball color=black]  (2  ,0) circle (.15);
\draw[ball color=black]  (2.5,0) circle (.15);

\draw[ball color=black] (0  ,0.5) circle (.15);
\draw[ball color=red] (0.5,0.5) circle (.15);
\draw[ball color=red] (1  ,0.5) circle (.15);
\draw[ball color=red] (1.5,0.5) circle (.15);
\draw[ball color=red]  (2  ,0.5) circle (.15);
\draw[ball color=red]  (2.5,0.5) circle (.15);

\draw[ball color=black](0  ,1) circle (.15);
\draw[ball color=red](0.5,1) circle (.15);
\draw[ball color=red] (1  ,1) circle (.15);
\draw[ball color=red] (1.5,1) circle (.15);
\draw[ball color=red] (2  ,1) circle (.15);
\draw[ball color=red] (2.5,1) circle (.15);

\draw[ball color=black](0  ,1.5) circle (.15);
\draw[ball color=red](0.5,1.5) circle (.15);
\draw[ball color=red] (1  ,1.5) circle (.15);
\draw[ball color=red] (1.5,1.5) circle (.15);
\draw[ball color=red] (2  ,1.5) circle (.15);
\draw[ball color=red] (2.5,1.5) circle (.15);

\draw[ball color=black] (0  ,2) circle (.15);
\draw[ball color=red] (0.5,2) circle (.15);
\draw[ball color=red] (1  ,2) circle (.15);
\draw[ball color=red] (1.5,2) circle (.15);
\draw[ball color=red] (2  ,2) circle (.15);
\draw[ball color=red] (2.5,2) circle (.15);

\draw[ball color=black] (0  ,2.5) circle (.15);
\draw[ball color=red] (0.5,2.5) circle (.15);
\draw[ball color=red] (1  ,2.5) circle (.15);
\draw[ball color=red] (1.5,2.5) circle (.15);
\draw[ball color=red] (2  ,2.5) circle (.15);
\draw[ball color=red] (2.5,2.5) circle (.15);
\end{tikzpicture}
\\[0.5cm]
\begin{tikzpicture}
\draw[ball color=red] (0  ,0) circle (.15);
\draw[ball color=red] (0.5,0) circle (.15);
\draw[ball color=red] (1  ,0) circle (.15);
\draw[ball color=red] (1.5,0) circle (.15);
\draw[ball color=red]  (2  ,0) circle (.15);
\draw[ball color=black]  (2.5,0) circle (.15);

\draw[ball color=red] (0  ,0.5) circle (.15);
\draw[ball color=red] (0.5,0.5) circle (.15);
\draw[ball color=red] (1  ,0.5) circle (.15);
\draw[ball color=red] (1.5,0.5) circle (.15);
\draw[ball color=red]  (2  ,0.5) circle (.15);
\draw[ball color=black]  (2.5,0.5) circle (.15);

\draw[ball color=red](0  ,1) circle (.15);
\draw[ball color=red](0.5,1) circle (.15);
\draw[ball color=red] (1  ,1) circle (.15);
\draw[ball color=red] (1.5,1) circle (.15);
\draw[ball color=red] (2  ,1) circle (.15);
\draw[ball color=black] (2.5,1) circle (.15);

\draw[ball color=red](0  ,1.5) circle (.15);
\draw[ball color=red](0.5,1.5) circle (.15);
\draw[ball color=red] (1  ,1.5) circle (.15);
\draw[ball color=red] (1.5,1.5) circle (.15);
\draw[ball color=red] (2  ,1.5) circle (.15);
\draw[ball color=black] (2.5,1.5) circle (.15);

\draw[ball color=red] (0  ,2) circle (.15);
\draw[ball color=red] (0.5,2) circle (.15);
\draw[ball color=red] (1  ,2) circle (.15);
\draw[ball color=red] (1.5,2) circle (.15);
\draw[ball color=red] (2  ,2) circle (.15);
\draw[ball color=black] (2.5,2) circle (.15);

\draw[ball color=black] (0  ,2.5) circle (.15);
\draw[ball color=black] (0.5,2.5) circle (.15);
\draw[ball color=black] (1  ,2.5) circle (.15);
\draw[ball color=black] (1.5,2.5) circle (.15);
\draw[ball color=black] (2  ,2.5) circle (.15);
\draw[ball color=black] (2.5,2.5) circle (.15);
\end{tikzpicture}
&
\begin{tikzpicture}
\draw[ball color=black] (0  ,0) circle (.15);
\draw[ball color=red] (0.5,0) circle (.15);
\draw[ball color=red] (1  ,0) circle (.15);
\draw[ball color=red] (1.5,0) circle (.15);
\draw[ball color=red]  (2  ,0) circle (.15);
\draw[ball color=red]  (2.5,0) circle (.15);

\draw[ball color=black] (0  ,0.5) circle (.15);
\draw[ball color=red] (0.5,0.5) circle (.15);
\draw[ball color=red] (1  ,0.5) circle (.15);
\draw[ball color=red] (1.5,0.5) circle (.15);
\draw[ball color=red]  (2  ,0.5) circle (.15);
\draw[ball color=red]  (2.5,0.5) circle (.15);

\draw[ball color=black](0  ,1) circle (.15);
\draw[ball color=red](0.5,1) circle (.15);
\draw[ball color=red] (1  ,1) circle (.15);
\draw[ball color=red] (1.5,1) circle (.15);
\draw[ball color=red] (2  ,1) circle (.15);
\draw[ball color=red] (2.5,1) circle (.15);

\draw[ball color=black](0  ,1.5) circle (.15);
\draw[ball color=red](0.5,1.5) circle (.15);
\draw[ball color=red] (1  ,1.5) circle (.15);
\draw[ball color=red] (1.5,1.5) circle (.15);
\draw[ball color=red] (2  ,1.5) circle (.15);
\draw[ball color=red] (2.5,1.5) circle (.15);

\draw[ball color=black] (0  ,2) circle (.15);
\draw[ball color=red] (0.5,2) circle (.15);
\draw[ball color=red] (1  ,2) circle (.15);
\draw[ball color=red] (1.5,2) circle (.15);
\draw[ball color=red] (2  ,2) circle (.15);
\draw[ball color=red] (2.5,2) circle (.15);

\draw[ball color=black] (0  ,2.5) circle (.15);
\draw[ball color=black] (0.5,2.5) circle (.15);
\draw[ball color=black] (1  ,2.5) circle (.15);
\draw[ball color=black] (1.5,2.5) circle (.15);
\draw[ball color=black] (2  ,2.5) circle (.15);
\draw[ball color=black] (2.5,2.5) circle (.15);
\end{tikzpicture}
\end{tabular}
\end{center}
\caption{Stencils used to get $p^4_{\mathbf{j}_0}(\xe)$, $\mathbf{j}_0\in\{0,1\}^2$. {\color{black} The red and black colors highlight the four distinct intermediate stencils resulting from the first stage of the recursive Aitken–Neville procedure, acting as the transition toward the final degree 5 reconstruction.}}\label{progresivo1}
\end{figure}
{\color{black} Now, we want to get $p^5_{(0,0)}(\xe)$ as a convex combination of the approximations $p^{4}_{(0,0)}(\xe)$, $p^{4}_{(0,1)}(\xe)$, $p^{4}_{(1,0)}(\xe)$, $p^{4}_{(1,1)}(\xe)$. Note that the stencils used are
$$\mathcal{S}^{5}_{(0,0)}=\{x^{{\ell}}_{i_0-3},x^{{\ell}}_{i_0-2},x^{{\ell}}_{i_0-1},x^{{\ell}}_{i_0},x^{{\ell}}_{i_0+1}\}\times\{y^{{\ell}}_{j_0-3},y^{{\ell}}_{j_0-2},y^{{\ell}}_{j_0-1},y^{{\ell}}_{j_0},y^{{\ell}}_{j_0+1}\}$$
for $p^{4}_{(0,0)}$;
$$\mathcal{S}^{5}_{(1,0)}=\{x^{{\ell}}_{i_0-2},x^{{\ell}}_{i_0-1},x^{{\ell}}_{i_0},x^{{\ell}}_{i_0+1},x^{{\ell}}_{i_0+2},\}\times\{y^{{\ell}}_{j_0-3},y^{{\ell}}_{j_0-2},y^{{\ell}}_{j_0-1},y^{{\ell}}_{j_0},y^{{\ell}}_{j_0+1}\}$$
for $p^{4}_{(1,0)}$;
$$\mathcal{S}^{5}_{(0,1)}=\{x^{{\ell}}_{i_0-3},x^{{\ell}}_{i_0-2},x^{{\ell}}_{i_0-1},x^{{\ell}}_{i_0},x^{{\ell}}_{i_0+1}\}\times\{y^{{\ell}}_{j_0-2},y^{{\ell}}_{j_0-1},y^{{\ell}}_{j_0},y^{{\ell}}_{j_0+1},y^{{\ell}}_{j_0+2}\}$$
for $p^{4}_{(0,1)}$;
$$\mathcal{S}^{5}_{(1,1)}=\{x^{{\ell}}_{i_0-2},x^{{\ell}}_{i_0-1},x^{{\ell}}_{i_0},x^{{\ell}}_{i_0+1},x^{{\ell}}_{i_0+2}\}\times\{y^{{\ell}}_{j_0-2},y^{{\ell}}_{j_0-1},y^{{\ell}}_{j_0},y^{{\ell}}_{j_0+1},y^{{\ell}}_{j_0+2}\},$$
for $p^4_{(1,1)}$.
Again, the stencil $\mathcal{S}^6_0$ used to compute $p^5_{(0,0)}$ is the union of these stencils. Thus,}
in the final stage of our algorithm for $r=3$, the polynomial of degree 5 is expanded in the following manner:
\begin{equation}\label{p5lineal}
p^{5}_{(0,0)}(\xe)=\sum_{\mathbf{j}_0\in\{0,1\}^2}C_{(0,0),\mathbf{j}_0}^4({\xe})p^{4}_{\mathbf{j}_0}(\xe),
\end{equation}
with
\begin{equation*}
\begin{split}
C^4_{(0,0),(0,0)}(\xe)&=C^4_{0,0}(x^*)C^4_{0,0}(y^*),\quad C^4_{(0,0),(1,0)}(\xe)=C^4_{0,1}(x^*)C^4_{0,0}(y^*),\\
C^4_{(0,0),(0,1)}(\xe)&=C^4_{0,0}(x^*)C^4_{0,1}(y^*),\quad C^4_{(0,0),(1,1)}(\xe)=C^4_{0,1}(x^*)C^4_{0,1}(y^*),
\end{split}
\end{equation*}
where
\begin{equation}\label{pesos4}
C^4_{0,0}(x^*)=\frac{x^*-x^{{\color{black}\ell}}_{i_0+2}}{x^{{\color{black}\ell}}_{i_0-3}-x^{{\color{black}\ell}}_{i_0+2}}, \quad C^4_{0,1}(x^*)=1-C^4_{0,0}(x^*),
\end{equation}
and to establish the new approximation by modifying both the non-linear weights and the polynomial approximation of degree 4
\begin{equation}\label{p5lineal2}
\tilde{\mathcal{I}}^5(\xe;f):=\tilde{p}^{5}_{(0,0)}(\xe)=\sum_{\mathbf{j}_0\in\{0,1\}^2}\tilde{\omega}_{(0,0),\mathbf{j}_0}^4({\xe})\tilde{p}^{4}_{\mathbf{j}_0}(\xe),
\end{equation}
being $\tilde{p}^{4}_{\mathbf{j}_0}(\xe)$ obtained in Eq. \eqref{eqp4primerejemplonolineal} and
\begin{equation*}
\tilde{\omega}_{(0,0),\mathbf{j}_0}^4(\xe)=\frac{\alpha_{(0,0),\mathbf{j}_0}^4(\xe)}{\sum_{\mathbf{l}\in\{0,1\}^2}\alpha_{(0,0),\mathbf{l}}^4(\xe)},\,\,  \textrm{ where }\,\, \alpha_{(0,0),\mathbf{j}_0}^4(\xe)=\frac{C_{(0,0),\mathbf{j}_0}^4(\xe)}{(\epsilon+I_{(0,0),\mathbf{j}_0}^4)^t}, \quad \mathbf{j}_0\in\{0,1\}^2,
\end{equation*}
with the smoothness indicators defined
as:
\begin{equation*}
I_{(0,0),(0,0)}^4=I^3_{(0,0)},\quad I_{(0,0),(0,1)}^4=I^3_{(0,2)},\quad I_{(0,0),(1,0)}^4=I^3_{(2,0)},\quad I_{(0,0),(1,1)}^4=I^3_{(2,2)},
\end{equation*}
where $I^3_{\mathbf{j}_0}, \, \mathbf{j}_0\in\{0,1,2\}^2$ are smoothness indicators satisfying the properties \ref{P1sm}, \ref{P2sm}, and \ref{P3sm}.
\begin{figure}[!hbtp]
\usetikzlibrary{arrows}
\begin{center}
  \input{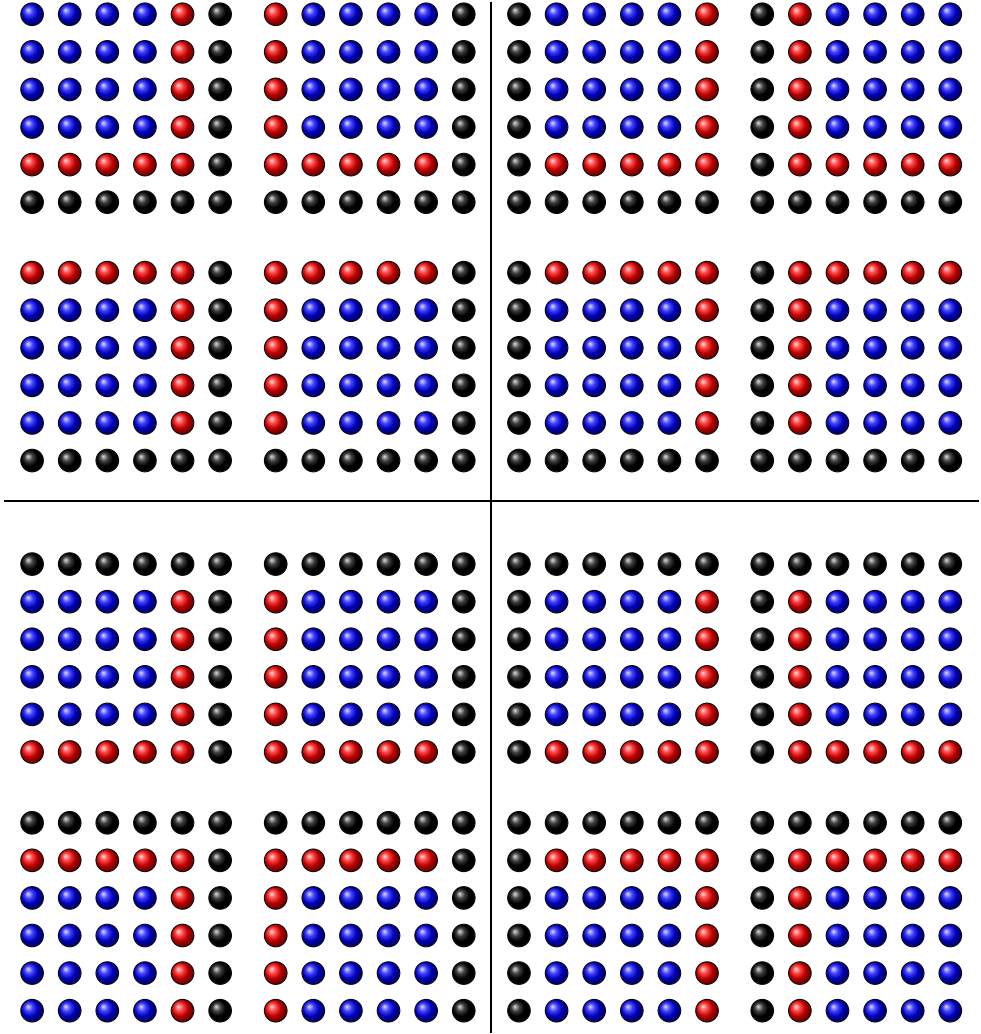}
\end{center}
\caption{Contribution of each stencil of $p^{3}_{\mathbf{j}_1}(\xe)$, $\mathbf{j}_1\in \mathbf{j}_1+\{0,1\}^2$ to the approximation of $p^4_{\mathbf{j}_0}(\xe)$, $\mathbf{j}_0\in\{0,1\}^2$. {\color{black} The red and blue colors illustrate how four specific degree 3 stencils (shown in Figure \ref{wenonormal}) are recursively combined and overlapped to construct a single degree 4 stencil (shown in Figure \ref{progresivo1}).}}\label{progresive2}
\end{figure}
Now, we determine the iterative process  in the general case, \cite{MRSY24}:
\begin{equation*}
\begin{split}
&  \tilde{p}^{r}_{\mathbf{j}_{r-2}}(\xe)=p^{r}_{\mathbf{j}_{r-2}}(\xe), \quad \mathbf{j}_{r-2}=\{0,\hdots,r-1\}^2,\\
&  \tilde{p}^{{{\color{black} \ell}}+1}_{\mathbf{j}_{2r-{{\color{black} \ell}}-3}}(\xe)=\sum_{\mathbf{j}_{2r-{{\color{black} \ell}}-2}\in\mathbf{j}_{2r-{{\color{black} \ell}}-3}+\{0,1\}^2}
  \tilde{\omega}^{{{\color{black} \ell}}}_{\mathbf{j}_{2r-{{\color{black} \ell}}-3},\mathbf{j}_{2r-{{\color{black} \ell}}-2}}(x^*)\tilde{p}_{\mathbf{j}_{2r-{{\color{black} \ell}}-2}}^{{{\color{black} \ell}}}(x^*), \quad
    {{\color{black} \ell}}=r,\dots,2r-2, \, \, \mathbf{j}_{2r-{{\color{black} \ell}}-3}\in\{0,\dots,2r-2-{{\color{black} \ell}}\}^2,\\
\end{split}
\end{equation*}
for ${{\color{black} \ell}}=r,\dots,2r-2$, and $\mathbf{k} \in \{0,\hdots,2r-2-{{\color{black} \ell}}\}^2$, and $\mathbf{k}_1\in\mathbf{k}+\{0,1\}^2$:
\begin{equation}\label{pesosr2d}
\begin{split}
&\tilde{\omega}^{{\color{black} \ell}}_{\mathbf{k},\mathbf{k}_1}(\xe)=\left(\frac{\tilde{\alpha}_{\mathbf{k},\mathbf{k}_1}^{{\color{black} \ell}}}{\tilde{\alpha}_{\mathbf{k},\mathbf{k}}^{{\color{black} \ell}}+\tilde{\alpha}_{\mathbf{k},\mathbf{k}+(1,0)}^{{\color{black} \ell}}+\tilde{\alpha}_{\mathbf{k},\mathbf{k}+(0,1)}^{{\color{black} \ell}}+\tilde{\alpha}_{\mathbf{k},\mathbf{k}+(1,1)}^{{\color{black} \ell}}}\right)(\xe),\\ &\tilde{\alpha}_{\mathbf{k},\mathbf{k}_1}^{{\color{black} \ell}}(\xe)=\frac{{C}_{\mathbf{k},\mathbf{k}_1}^{{\color{black} \ell}}(\xe)}{(\epsilon+I^{{\color{black} \ell}}_{\mathbf{k},\mathbf{k}_1})^t}, \quad \mathbf{k}_1\in \mathbf{k}+\{0,1\}^2,
\end{split}
\end{equation}
where $ I^{{\color{black} \ell}}_{\mathbf{k},\mathbf{k}_1}$ are the smoothness indicators determined by the following formula:
\begin{equation}\label{indices}
{I}^{{\color{black} \ell}}_{\mathbf{k},\mathbf{k}_1}=\left\{
                       \begin{array}{ll}
                         I^r_{\mathbf{k}}, & \hbox{if } \mathbf{k}_1=\mathbf{k}, \\
                         I^r_{\mathbf{k}+({{\color{black} \ell}}-(r-1),0)}, & \hbox{if } \mathbf{k}_1=\mathbf{k}+(1,0),\\
                         I^r_{\mathbf{k}+(0,{{\color{black} \ell}}-(r-1))}, & \hbox{if } \mathbf{k}_1=\mathbf{k}+(0,1),\\
                         I^r_{\mathbf{k}+({{\color{black} \ell}}-(r-1),{{\color{black} \ell}}-(r-1))}, & \hbox{if } \mathbf{k}_1=\mathbf{k}+(1,1),
                       \end{array}
                     \right.
\end{equation}
being $I^r_{\mathbf{k}}$, with $0\leq k_1,k_2\leq r-1$, smoothness indicators satisfying the properties \ref{P1sm}, \ref{P2sm}, and \ref{P3sm}.

Therefore, the bivariate progressive WENO approximation is
$$\tilde{\mathcal{I}}^{2r-1}\left(\xe;f\right)=\tilde{p}^{2r-1}_{(0,0)}(\xe).$$

\section{The new non-separable cell-average WENO method for $r=3$} \label{r3}
In this section, we explicit the formulas for the non-linear filters which we will use in our applications. The idea is easily generalizable to any $(2r-1)^2$ cells but to clarify the concepts, we insert this section here. As the cells are equally spaced, the filters are easier to compute. Let us start with the case $r=3$, where 5 data cells are used. We have for one dimension:
$$(\mathcal{P}_{{\color{black}\ell}-1}^{{\color{black}\ell}} \bar{f}^{{\color{black}\ell}})_{2i-1}=\frac{1}{128}[-3, 22, 128, -22, 3] [\bar f^{{{\color{black}\ell}}-1}_{i-2},\bar f^{{{\color{black}\ell}}-1}_{i-1},\bar f^{{{\color{black}\ell}}-1}_{i},\bar f^{{{\color{black}\ell}}-1}_{i+1},\bar f^{{{\color{black}\ell}}-1}_{i+2}]^T=\mathbf{v}_0^{5}\mathbf{\bar f}^{{{\color{black}\ell}}-1}_{i-2:i+2}.$$
If we denote as $\mathbf{w}_0^{5}=\frac{1}{128}[3, -22, 128, 22, -3]$ then
$$(\mathcal{P}_{{{\color{black}\ell}}-1}^{{\color{black}\ell}} \bar{f}^{{{\color{black}\ell}}-1})_{2i}=\mathbf{w}_0^{5}\mathbf{\bar f}^{{{\color{black}\ell}}-1}_{i-2:i+2}.$$
It is clear that for symmetry we can only work with $\mathbf{v}_0^{5}$, the case for $\mathbf{w}_0^{5}$ is similar. Now, it is easy to see that
\begin{equation*}
\begin{split}
\mathbf{v}_0^{5}&=C^4_{0,0}\mathbf{v}^4_{0}+C^4_{0,1}\mathbf{v}^4_{1}=\frac{C^4_{0,0}}{64}[-3,17, 55, -5, 0]+\frac{C^4_{0,1}}{64}[0,5, 73, -17, 3]\\
&=C^4_{0,0}(C^3_{0,0}\mathbf{v}^3_{0}+C^3_{0,1}\mathbf{v}^3_{1})+C^4_{0,1}(C^3_{1,1}\mathbf{v}^3_{1}+C^3_{1,2}\mathbf{v}^3_{2})\\
&=C^4_{0,0}\left(C^3_{0,0}\left[-\frac{1}{8},\frac{1}{2},\frac{5}{8},0,0\right]+C^3_{0,1}\left[0,\frac{1}{8},1,-\frac{1}{8},0\right]\right)+C^4_{0,1}\left(C^3_{1,1}\left[0,\frac{1}{8},1,-\frac{1}{8},0\right]+C^3_{1,2}\left[0,0,\frac{11}{8},-\frac{1}{2},\frac{1}{8}\right]\right).\\
\end{split}
\end{equation*}
This decomposition, which we have carried out in a single dimension, can be generalized to any dimension by means of the Kronecker product. Here, we focus specifically on the case $n=2$, where the use of vector products allows us to obtain the desired filters. We express them in tensor product form to apply the filters in two dimensions, obtaining
 \begin{equation*}
\begin{split}
(\mathcal{P}_{{{\color{black}\ell}}-1}^{{\color{black}\ell}} \bar{f}^{{{\color{black}\ell}}-1})_{2i-1,2j-1}&=\frac{1}{ 16384 }\begin{bmatrix}
9 & -66 & -384 & 66 & 9 \\
-66 & 484 & 2816 & -484 & -66 \\
-384 & 2816 & 16384 & -2816 & -384 \\
66 & -484 & -2816 & 484 & 66 \\
9 & -66 & -384 & 66 & 9
\end{bmatrix}.*
 \begin{bmatrix}
\bar f^{k-1}_{i-2,j-2} & \bar f^{k-1}_{i-2,j-1} & \bar f^{k-1}_{i-2,j} & \bar f^{k-1}_{i-2,j+1} & \bar f^{k-1}_{i-2,j+2} \\
\bar f^{k-1}_{i-1,j-2} & \bar f^{k-1}_{i-1,j-1} & \bar f^{k-1}_{i-1,j} & \bar f^{k-1}_{i-1,j+1} & \bar f^{k-1}_{i-1,j+2} \\
\bar f^{k-1}_{i,j-2} &   \bar f^{k-1}_{i,j-1} & \bar f^{k-1}_{i,j} & \bar f^{k-1}_{i,j+1} &     \bar f^{k-1}_{i,j+2} \\
\bar f^{k-1}_{i+1,j-2} & \bar f^{k-1}_{i+1,j-1} & \bar f^{k-1}_{i+1,j} & \bar f^{k-1}_{i+1,j+1} & \bar f^{k-1}_{i+1,j+2} \\
\bar f^{k-1}_{i+2,j-2} & \bar f^{k-1}_{i+2,j-1} & \bar f^{k-1}_{i+2,j} & \bar f^{k-1}_{i+2,j+1} & \bar f^{k-1}_{i+2,j+2} \\
\end{bmatrix}   \\
&=
(\mathbf{v}_0^{5})^T (\mathbf{v}_0^{5}).*\mathbf{\bar f}^{{{\color{black}\ell}}-1}_{i-2:i+2,j-2:j+2},
\end{split}
\end{equation*}
being the product $.*$ defined for two square matrices $A,B\in \mathbb{R}^{N\times N}$ as
$$A.*B=\sum_{i,j=1}^N A_{i,j}B_{i,j}.$$
At this stage, by employing the two-dimensional Aitken formula, we obtain the decomposition
\begin{equation}\label{eqmatrices}
\begin{split}
(\mathbf{v}_0^{5})^T (\mathbf{v}_0^{5})
=&(C^4_{0,0}\mathbf{v}^4_{0}+C^4_{0,1}\mathbf{v}^4_{1})^T  (C^4_{0,0}\mathbf{v}^4_{0}+C^4_{0,1}\mathbf{v}^4_{1})\\
=&C^4_{0,0}C^4_{0,0}(\mathbf{v}^4_{0})^T   \mathbf{v}^4_{0} + C^4_{0,1}C^4_{0,0}(\mathbf{v}^4_{1})^T  \mathbf{v}^4_{0}+C^4_{0,0}C^4_{0,1}(\mathbf{v}^4_{0})^T \mathbf{v}^4_{1}+C^4_{0,1}C^4_{0,1}(\mathbf{v}^4_{1})^T \mathbf{v}^4_{1}\\
=&C^4_{0,0}C^4_{0,0}((C^3_{0,0}\mathbf{v}^3_{0}+C^3_{0,1}\mathbf{v}^3_{1})^T   (C^3_{0,0}\mathbf{v}^3_{0}+C^3_{0,1}\mathbf{v}^3_{1}))+ C^4_{0,1}C^4_{0,0}((C^3_{1,1}\mathbf{v}^3_{1}+C^3_{1,2}\mathbf{v}^3_{2})^T (C^3_{0,0}\mathbf{v}^3_{0}+C^3_{0,1}\mathbf{v}^3_{1}))\\
&+C^4_{0,0}C^4_{0,1}((C^3_{0,0}\mathbf{v}^3_{0}+C^3_{0,1}\mathbf{v}^3_{1})^T (C^3_{1,1}\mathbf{v}^3_{1}+C^3_{1,2}\mathbf{v}^3_{2}))+C^4_{0,1}C^4_{0,1}(C^3_{1,1}\mathbf{v}^3_{1}+C^3_{1,2}\mathbf{v}^3_{2})^T (C^3_{1,1}\mathbf{v}^3_{1}+C^3_{1,2}\mathbf{v}^3_{2}))\\
=&C^4_{0,0}C^4_{0,0}(C^3_{0,0}C^3_{0,0}(\mathbf{v}^3_{0})^T\mathbf{v}^3_{0}+C^3_{0,1}C^3_{0,0}(\mathbf{v}^3_{1})^T \mathbf{v}^3_{0}+C^3_{0,0}C^3_{0,1}(\mathbf{v}^3_{0})^T \mathbf{v}^3_{1} + C^3_{0,1}C^3_{0,1}(\mathbf{v}^3_{1})^T\mathbf{v}^3_{1})\\
&+ C^4_{0,1}C^4_{0,0}(C^3_{1,1}C^3_{0,0}(\mathbf{v}^3_{1})^T\mathbf{v}^3_{0}+C^3_{0,0}C^3_{1,2}(\mathbf{v}^3_{2})^T\mathbf{v}^3_{0}+C^3_{1,1}C^3_{0,1}(\mathbf{v}^3_{1})^T\mathbf{v}^3_{1}+C^3_{0,1}C^3_{1,2}(\mathbf{v}^3_{2})^T\mathbf{v}^3_{1})\\
&+C^4_{0,0}C^4_{0,1}(C^3_{0,0}C^3_{1,2}(\mathbf{v}^3_{0})^T\mathbf{v}^3_{2}+C^3_{0,1}C^3_{1,1}(\mathbf{v}^3_{1})^T \mathbf{v}^3_{1}+C^3_{0,0}C^3_{1,1}(\mathbf{v}^3_{0})^T\mathbf{v}^3_{1}+C^3_{1,2}C^3_{0,1}(\mathbf{v}^3_{1})^T\mathbf{v}^3_{2})\\
&+C^4_{0,1}C^4_{0,1}(C^3_{1,1}C^3_{1,1}(\mathbf{v}^3_{1})^T\mathbf{v}^3_{1}+C^3_{1,1}C^3_{1,2}(\mathbf{v}^3_{2})^T\mathbf{v}^3_{1} +C^3_{1,1}C^3_{1,2}(\mathbf{v}^3_{1})^T\mathbf{v}^3_{2}+C^3_{1,2}C^3_{1,2}(\mathbf{v}^3_{2})^T\mathbf{v}^3_{2}),\\
\end{split}
\end{equation}
with
$$C^4_{0,0}=C^4_{0,1}=\frac{1}{2},\quad C^3_{0,0}=C^3_{1,2}=\frac{3}{8},\quad C^3_{0,1}=C^3_{1,1}=\frac{5}{8}.$$
\begin{equation*}
(\mathbf{v}^3_{0})^T \mathbf{v}^3_{0}=\frac{1}{64 }\begin{bmatrix}
     1  &  -4 &   -5  &   0  &   0    \\
    -4  &  16 &   20  &   0  &   0    \\
    -5  &  20 &   25  &   0  &   0    \\
     0  &   0 &    0  &   0  &   0    \\
     0  &   0 &    0  &   0  &   0
\end{bmatrix},\quad  (\mathbf{v}^3_{0})^T \mathbf{v}^3_{1}=\frac{1}{64 }\begin{bmatrix}
        0   & -1  &  -8  &   1  &   0  \\
        0   &  4  &  32  &  -4  &   0  \\
        0   &  5  &  40  &  -5  &   0  \\
        0   &  0  &   0  &   0  &   0  \\
        0   &  0  &   0  &   0  &   0
\end{bmatrix},
\end{equation*}
\begin{equation*}
(\mathbf{v}^3_{0})^T \mathbf{v}^3_{2}=\frac{1}{64 }\begin{bmatrix}
     0  &   0 &  -11  &   4 &   -1  \\
     0  &   0 &   44  & -16 &    4  \\
     0  &   0 &   55  & -20 &    5  \\
     0  &   0 &    0  &   0 &    0  \\
     0  &   0 &    0  &   0 &    0
\end{bmatrix}, \quad (\mathbf{v}^3_{1})^T \mathbf{v}^3_{1}=\frac{1}{64 }\begin{bmatrix}
     0 &    0   &  0  &   0 &    0   \\
     0 &    1   &  8  &  -1 &    0   \\
     0 &    8   & 64  &  -8 &    0   \\
     0 &   -1   & -8  &   1 &    0   \\
     0 &    0   &  0  &   0 &    0
\end{bmatrix},
\end{equation*}

\begin{equation*}
(\mathbf{v}^3_{1})^T \mathbf{v}^3_{2}=\frac{1}{64 }\begin{bmatrix}
     0  &   0 &    0  &   0   &  0  \\
     0  &   0 &   11  &  -4   &  1  \\
     0  &   0 &   88  & -32   &  8  \\
     0  &   0 &  -11  &   4   & -1  \\
     0  &   0 &    0  &   0   &  0
\end{bmatrix}, \quad (\mathbf{v}^3_{2})^T \mathbf{v}^3_{2}=\frac{1}{64 }\begin{bmatrix}
          0  &    0  &    0 &     0 &     0     \\
          0  &    0  &    0 &     0 &     0     \\
          0  &    0  &  121 &   -44 &    11     \\
          0  &    0  &  -44 &    16 &    -4     \\
          0  &    0  &   11 &    -4 &     1
\end{bmatrix}.
\end{equation*}
As it can be seen in the above matrices, the goal is to avoid using nodes that are close to a discontinuity. Thus, we replace the linear weights appearing in Eq. \eqref{eqmatrices} with nonlinear weights in the following manner:
\begin{equation}\label{eqmatrices2}
\begin{split}
(\mathbf{v}_0^{5})^T (\mathbf{v}_0^{5})
=&\tilde{\omega}^4_{(0,0),(0,0)}(\tilde{\omega}^3_{(0,0),(0,0)}(\mathbf{v}^3_{0})^T\mathbf{v}^3_{0}+\tilde{\omega}^3_{(0,0),(1,0)}(\mathbf{v}^3_{1})^T \mathbf{v}^3_{0}+\tilde{\omega}^3_{(0,0),(0,1)}(\mathbf{v}^3_{0})^T \mathbf{v}^3_{1} + \tilde{\omega}^3_{(0,0),(1,1)}(\mathbf{v}^3_{1})^T\mathbf{v}^3_{1})\\
&+ \tilde{\omega}^4_{(0,0),(1,0)}(\tilde{\omega}^3_{(1,0),(1,0)}(\mathbf{v}^3_{1})^T\mathbf{v}^3_{0}+\tilde{\omega}^3_{(0,1),(0,2)}(\mathbf{v}^3_{2})^T\mathbf{v}^3_{0}+\tilde{\omega}^3_{(1,0),(1,0)}(\mathbf{v}^3_{1})^T\mathbf{v}^3_{1}+\tilde{\omega}^3_{(0,1),(1,2)}(\mathbf{v}^3_{2})^T\mathbf{v}^3_{1})\\
&+\tilde{\omega}^4_{(0,0),(0,1)}(\tilde{\omega}^3_{(0,1),(0,2)}(\mathbf{v}^3_{0})^T\mathbf{v}^3_{2}+\tilde{\omega}^3_{(0,1),(1,1)}(\mathbf{v}^3_{1})^T \mathbf{v}^3_{1}+\tilde{\omega}^3_{(1,0),(2,0)}(\mathbf{v}^3_{0})^T\mathbf{v}^3_{2}+\tilde{\omega}^3_{(1,0),(2,1)}(\mathbf{v}^3_{1})^T\mathbf{v}^3_{2})\\
&+\tilde{\omega}^4_{(0,0),(1,1)}(\tilde{\omega}^3_{(1,1),(1,1)}(\mathbf{v}^3_{1})^T\mathbf{v}^3_{1}+\tilde{\omega}^3_{(1,1),(1,2)}(\mathbf{v}^3_{2})^T\mathbf{v}^3_{1} +\tilde{\omega}^3_{(1,1),(1,2)}(\mathbf{v}^3_{1})^T\mathbf{v}^3_{2}+\tilde{\omega}^4_{(1,2),(1,2)}(\mathbf{v}^3_{2})^T\mathbf{v}^3_{2}),\\
\end{split}
\end{equation}
with
\begin{equation*}
\tilde{\omega}_{\mathbf{j}_0,\mathbf{j}_1}^3=\frac{\tilde{\alpha}_{\mathbf{j}_0,\mathbf{j}_1}^3}{\sum_{\mathbf{l}\in\mathbf{j}_0+\{0,1\}^2}\tilde{\alpha}_{\mathbf{j}_0,\mathbf{l}}^3},\,\,  \textrm{ where }\,\, \tilde{\alpha}_{\mathbf{j}_0,\mathbf{j}_1}^3=\frac{C_{\mathbf{j}_0,\mathbf{j}_1}^3}{(\epsilon+I_{\mathbf{j}_0,\mathbf{j}_1}^3)^t},\quad \mathbf{j}_0\in\{0,1\}^2, \, \mathbf{j}_1\in\mathbf{j}_0+\{0,1\}^2,
\end{equation*}
where
$$I_{\mathbf{j}_0,\mathbf{j}_1}^3=I_{\mathbf{j}_1}^3,\,\,\text{ with }\,\,\mathbf{j}_1\in\{(0,0),(1,0),(0,1),(1,1),(2,0),(0,2),(1,2),(2,1)\},$$
and\begin{equation*}
\tilde{\omega}_{(0,0),\mathbf{j}_0}^4=\frac{\alpha_{(0,0),\mathbf{j}_0}^4}{\sum_{\mathbf{l}\in\{0,1\}^2}\alpha_{(0,0),\mathbf{l}}^4},\,\,  \textrm{ where }\,\, \alpha_{(0,0),\mathbf{j}_0}^4=\frac{C_{(0,0),\mathbf{j}_0}^4}{(\epsilon+I_{(0,0),\mathbf{j}_0}^4)^t}, \quad \mathbf{j}_0\in\{0,1\}^2,
\end{equation*}
being 
\begin{equation*}
I_{(0,0),(0,0)}^4=I^3_{(0,0)},\quad I_{(0,0),(0,1)}^4=I^3_{(0,2)},\quad I_{(0,0),(1,0)}^4=I^3_{(2,0)},\quad I_{(0,0),(1,1)}^4=I^3_{(2,2)}.
\end{equation*}
Now, to compute the smoothness indicators, we rely on the relations given in Eq.~\eqref{fijcell} and Eq.~\eqref{fijcell2}, which allow us to evaluate them using the formulas presented in \cite{arandigamuletrenau}. Accordingly, the smoothness indicators can be calculated as:
\[ I_{{\color{black} \mathbf{k}}}^3 = \sum_{\substack{(m,n)\in \{0,1,2\}^2\\ (m,n)\neq (0,0)}}
\int_{x_{i-1}}^{x_i}\int_{y_{j-1}}^{y_j} \frac{\partial^{n+m+2}}{\partial x^{m+1}\partial y^{n+1}}\mathcal{I}_{k_1,k_2}(x,y;F^{{{\color{black}\ell}}-1})dxdy.
\]
Being ${\color{black} \mathbf{k}}\in \{0,1,2\}^2$ and $\mathcal{I}_{k_1,k_2}(x,y;F^{{{\color{black}\ell}}-1})$ the corresponding interpolant. We calculate an explicit expression for these smoothness indicators based on linear combinations of the data. Therefore, as we only use these values, the computational cost of the new method is similar to classical WENO. It is very easy to extend this procedure to any degree $r$ but, for simplicity, we center our explanation on $r=3$.

\section{Numerical experiments}\label{expnum}

In this section, we present a series of numerical experiments designed to compare the performance of linear reconstruction with that of the nonlinear reconstruction based on the newly proposed non-separable cell-averaged WENO-$2r$ method for $r=3$ (see Section~\ref{r3}). First, we apply both methods to a set of test functions exhibiting different types of discontinuities, in order to evaluate which approach yields better results. To complement this analysis, we also perform image compression experiments using color digital images, allowing us to observe and assess the compression capabilities of each method in a practical setting.

\subsection{Cell-averaging of functions}

We will analyze and compare the behaviour of linear and non-linear methods when dealing with functions that have discontinuities. Since the data are represented in terms of cell averages, as we mention in the previous section we consider the data as:
\begin{equation} \label{cell}
\bar{f}^{{\color{black}\ell}}_{i,j} = \frac{1}{h_{{\color{black}\ell}}^2} 
\int_{x^{{\color{black}\ell}}_{i-1}}^{x^{{\color{black}\ell}}_{i}}\!\!\int_{y^{{\color{black}\ell}}_{j-1}}^{y^{{\color{black}\ell}}_{j}} 
f(x,y)\,dx\,dy,
\end{equation}
where ${\color{black}\ell}$ is the level of resolution, $h_{{\color{black}\ell}}^2$ denotes the mesh spacing and $\Omega_{i,j}^{{\color{black}\ell}}=[x^{{\color{black}\ell}}_{i-1},x^{{\color{black}\ell}}_{i+1}]\times[y^{{\color{black}\ell}}_{j-1},y^{{\color{black}\ell}}_{j}]$ is the corresponding computational cell. Each $\bar{f}^{{\color{black}\ell}}_{i,j}$ therefore represents the mean value of $f$ over the cell $\Omega^{{\color{black}\ell}}_{i,j}$.

The accuracy of the reconstruction at level ${\color{black}\ell}$ is assessed in the discrete $\ell^2$ norm, defined as
\begin{equation} \label{eq:L2cell}
E_{2} = 
\big\| 
(\mathcal{P}^{{\color{black}\ell}}_{{\color{black}\ell}-1} \bar{f}^{{\color{black}\ell}-1})_{i,j} - \bar{f}^{{\color{black}\ell}}_{i,j}
\big\|_{2}
=
\left(
h_{{\color{black}\ell}}^2 \sum_{i,j=1}^{2^{{\color{black}\ell}}}
\left|
(\mathcal{P}^{{\color{black}\ell}}_{{\color{black}\ell}-1} \bar{f}^{{\color{black}\ell}-1})_{i,j} - \bar{f}^{{\color{black}\ell}}_{i,j}
\right|^2
\right)^{1/2},
\end{equation}
where $\bar{f}^{{\color{black}\ell}}_{i,j}$ denotes the exact cell-averaged data at level ${\color{black}\ell}$ and 
$(\mathcal{P}^{{\color{black}\ell}}_{{\color{black}\ell}-1} \bar{f}^{{\color{black}\ell}-1})_{i,j}$ represents the reconstructed cell averages obtained by prolongation from level ${\color{black}\ell}-1$ to level ${\color{black}\ell}$ using either the linear or the WENO predictor. This cell-averaged $\ell^2$ error provides a global measure of the mean quadratic deviation between the exact and reconstructed fields, being consistent with the conservative formulation of the multiresolution representation.

{\color{black} The} resolution level we will use for decimation and prediction is $L=1$, performing a multiresolution calculation without details in order to compare the linear and non-linear methods, that is, we simply reduce the resolution by one level and make predictions from this level without storing any error.


First of all, we consider the function
\begin{equation}
    g(x,y) = x^{3} - y^{3} + 2.1\,x^{2}y^{2} + x^{2} - 0.1\,y^{2} - y + x - 0.01\,xy + 1,
    \label{eq:gxy}
\end{equation}
defined over the domain $[-1,1]^2$. The numerical experiments are performed on a uniform grid of $512 \times 512$ computational cells. The data are represented in terms of cell averages, Eq. \eqref{cell}.

To introduce a controlled discontinuity in the domain, a modified function is defined by adding a constant in the lower half of the domain (along the line $y=0$):
\begin{equation}
    \tilde{g}(x,y) =
    \begin{cases}
        g(x,y), & y < 0,\\[4pt]
        g(x,y) + C, & y \geq 0,
    \end{cases}
    \label{eq:gtilde}
\end{equation}
where $C=16$ is the chosen order of magnitude jump. This configuration allows the behaviour of the linear and WENO methods to be analysed in smooth regions and in the presence of discontinuities.

\begin{figure}[H]
    \centering
    \begin{tabular}{ccc}
    \includegraphics[width=0.32\textwidth]{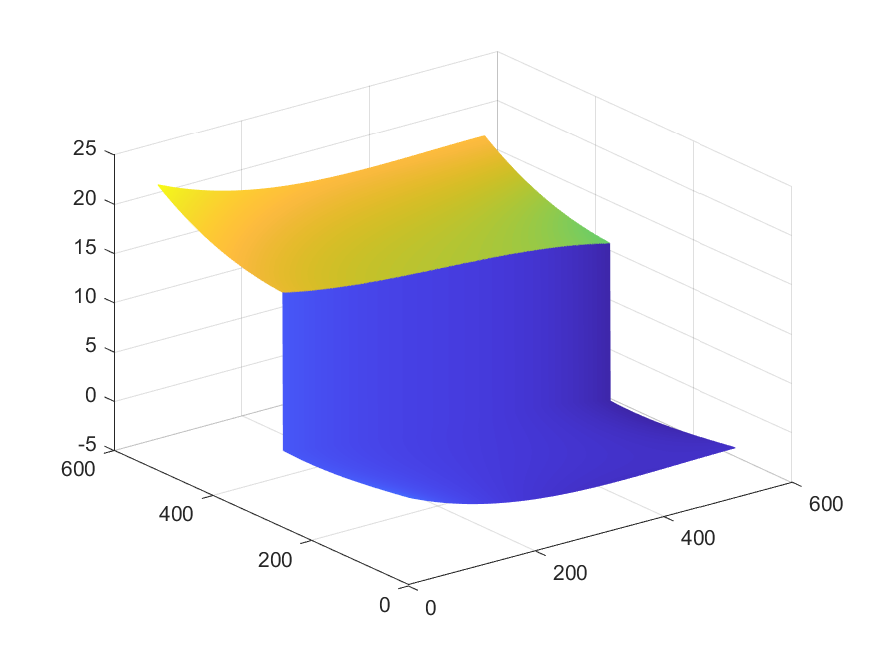}&
    \includegraphics[width=0.32\textwidth]{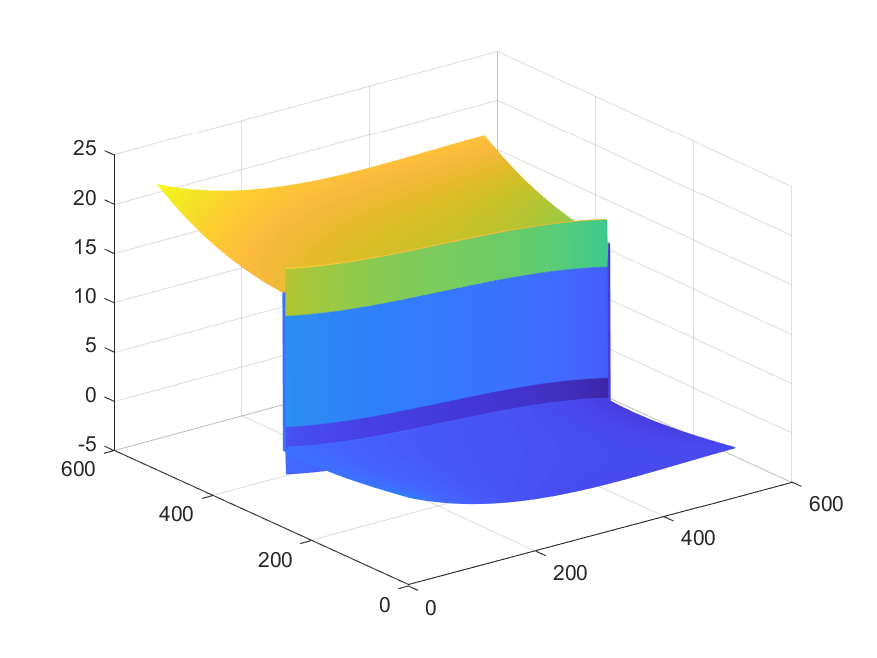}&
    \includegraphics[width=0.32\textwidth]{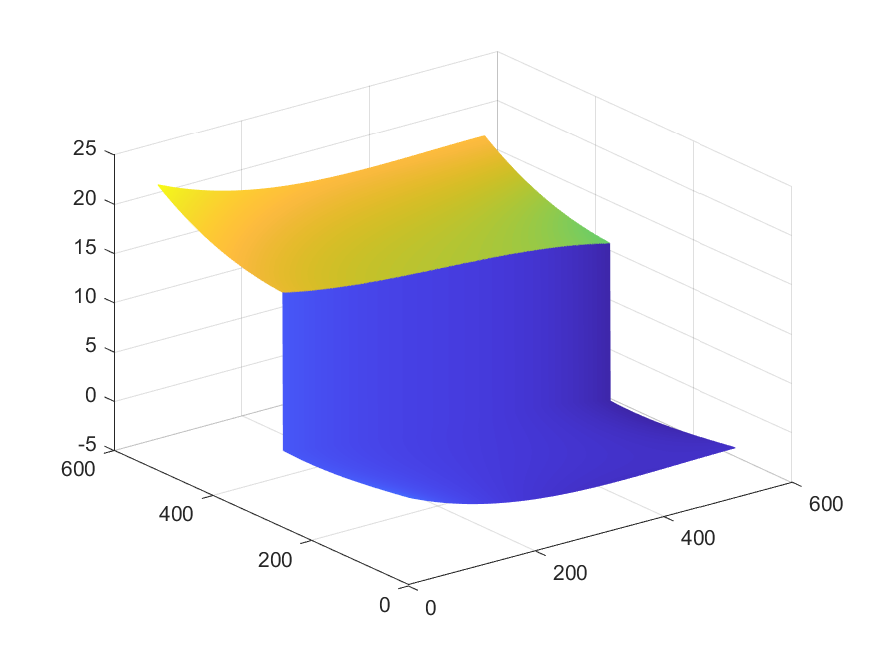}\\
        (a) & (b) & (c)
    \end{tabular}
    \caption{Three-dimensional representations of the polynomial test function \eqref{eq:gtilde} defined on the domain $[-1,1]^2$. Panel \textbf{(a)} shows the original function exhibiting a discontinuity of magnitude $C=16$ in the lower half of the domain. Panels \textbf{(b)} and \textbf{(c)} display the reconstructed surfaces obtained with the linear and WENO-2D prediction schemes, respectively.}
    \label{surface1}
\end{figure}
Figure \ref{surface1} shows the test polynomial function \eqref{eq:gtilde} defined in the square domain $[-1,1]^2$, together with its decimated version and the reconstructions obtained using two prediction methods: the linear method and the new progressive bivariate WENO method. As it can be seen, the linear method adequately reproduces the smooth regions but exhibits noticeable numerical diffusion in the jump area, while the WENO scheme better preserves the shape and height of the discontinuity, avoiding spurious oscillations around the edge. Furthermore, the error in the discrete $\ell^2$ norm, Eq. \eqref{eq:L2cell}, obtained with the linear scheme was $E_2^{\rm LIN}=$ {\color{black}4.2171e-01}, whereas for the WENO method it was $E_2^{\rm WENO}=$ {\color{black}1.8473e-05}. These results confirm the WENO scheme's greater capacity to preserve discontinuities without significant loss of accuracy in smooth regions.

Now, we study the reconstruction of a two-dimensional surface with an embedded discontinuity. We choose the same example as the one presented in \cite{MRSY24}, the function:
\begin{equation} \label{surf2}
h({\color{black}x},{\color{black} y}) =
\begin{cases}
e^{{\color{black} x} + {\color{black} y}} \cos({\color{black} x} - {\color{black} y}), & {\color{black} x} + {\color{black} y} \le 0,\\[1mm]
e^{{\color{black} x} + {\color{black} y}} \cos({\color{black} x} - {\color{black} y}) + 1, & {\color{black} x} + {\color{black} y} > 0.
\end{cases}
\end{equation}
is defined on the square domain $[-1,1]^2$ and combines a smooth exponential-cosine term with a jump along the line ${\color{black} x} + {\color{black} y} = 0$. This setup allows for the assessment of linear and non-linear reconstruction schemes in the presence of discontinuities.

The upper-right triangular portion of the domain exhibits a unit jump, which allows us to evaluate the performance of different reconstruction schemes in handling discontinuities while preserving accuracy in smooth regions.

As shown in Figure \ref{surface2}, the linear reconstruction captures the smooth trends of the surface but significantly smears the discontinuity. In contrast, the WENO-2D reconstruction accurately preserves the jump while maintaining high fidelity in smooth regions. This confirms the superior capability of WENO schemes for surfaces with mixed smooth and discontinuous features.

\begin{figure}[h!]
    \centering
    \begin{tabular}{ccc}
        \includegraphics[width=0.32\textwidth]{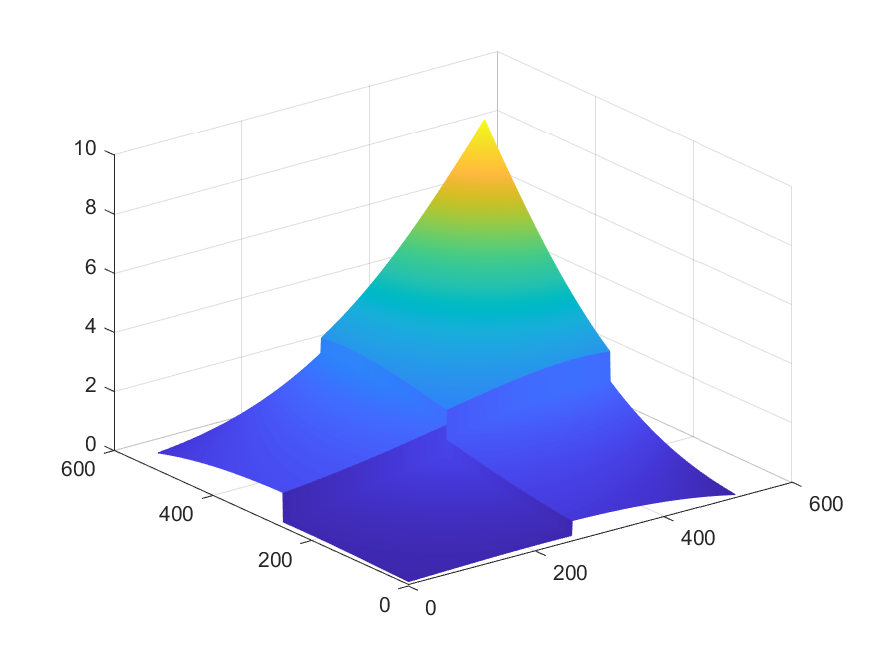} &
        \includegraphics[width=0.32\textwidth]{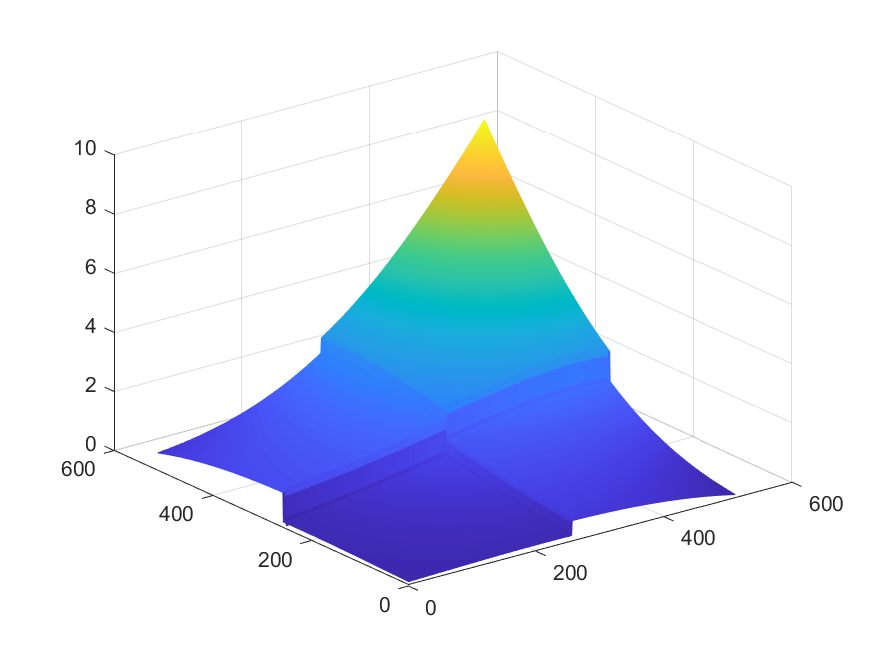} &
        \includegraphics[width=0.32\textwidth]{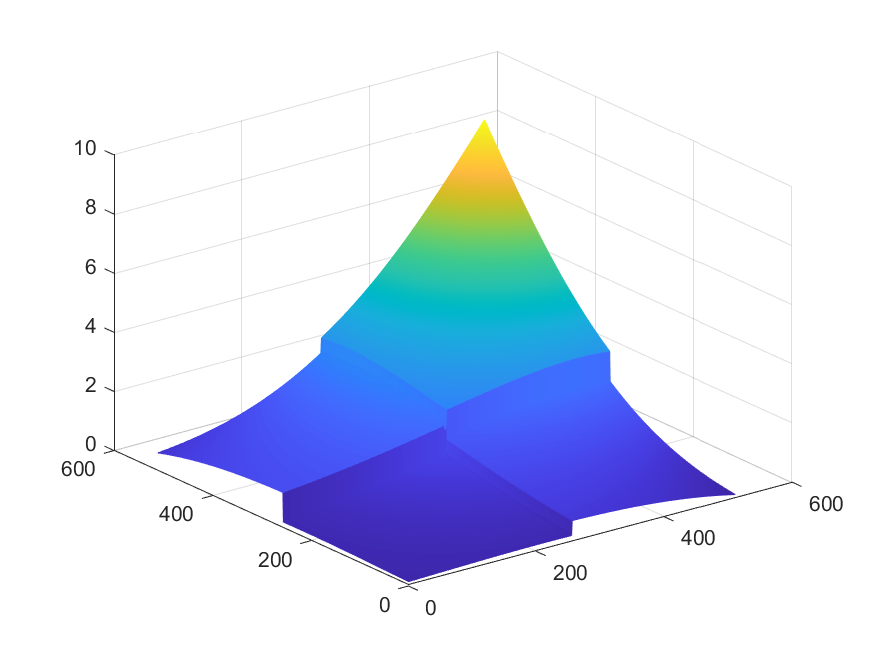} \\
        (a)  & (b)  & (c) 
    \end{tabular}
    \caption{Three-dimensional representations of the surface \eqref{surf2} on $[-1,1]^2$. Panel \textbf{(a)} shows the surface decimated to half resolution. Panel \textbf{(b)} presents the reconstruction obtained with a linear scheme, while panel \textbf{(c)} shows the reconstruction obtained using the progressive WENO-2D method. The WENO reconstruction preserves the discontinuity along ${\color{black} x}+{\color{black} y}=0$, whereas the linear scheme smooths the jump.}
    \label{surface2}
\end{figure}

The errors calculated demonstrate that WENO reconstruction minimizes deviation near discontinuities without compromising accuracy in continuous areas ($E_2^{\rm LIN} =$ {\color{black} 2.9468e-02}, while the WENO reconstruction achieved $E_2^{\rm WENO} =$ {\color{black} 7.8814e-03}).

Finally, we analyze the reconstruction of a classical two-dimensional Franke function \cite{Franke} modified to include a discontinuity along a straight line. The domain is the square $[-1,1]^2$, and the discontinuity can be either horizontal or vertical. This setup allows us to evaluate the performance of linear and non-linear reconstruction schemes in the presence of abrupt changes in the function values.

The modified Franke's function is defined as

\begin{equation} \label{franke_disc}
f({\color{black} x},{\color{black} y}) =
\begin{cases}
f_1({\color{black} x},{\color{black} y}), & \ell({\color{black} x},{\color{black} y})<0,\\[1mm]
1 + f_1({\color{black} x},{\color{black} y}), & \ell({\color{black} x},{\color{black} y})\ge 0,
\end{cases}
\end{equation}
where
\begin{equation*}
f_1({\color{black} x}, {\color{black} y}) =
\frac{3}{4} e^{ -\frac{(9{\color{black} x} - 2)^2}{4} - \frac{(9{\color{black} y} - 2)^2}{4} }
+ \frac{3}{4} e^{ -\frac{(9{\color{black} x} + 1)^2}{49} - \frac{(9{\color{black} y} + 1)}{10} }
+ \frac{1}{2} e^{ -\frac{(9{\color{black} x} - 7)^2}{4} - \frac{(9{\color{black} y} - 3)^2}{4} }
- \frac{1}{5} e^{ -(9{\color{black} x} - 4)^2 - (9{\color{black} y} - 7)^2 }.
\end{equation*}
and $\ell({\color{black} x},{\color{black} y})$ represents the line defining the discontinuity:
\[
\ell({\color{black} x},{\color{black} y}) =
\begin{cases}
{\color{black} y}, & \text{for a horizontal line at } y=0,\\
{\color{black} x}, & \text{for a vertical line at } x=0.
\end{cases}
\]

As observed in Figure~\ref{fig:franke_recon}, the linear reconstruction fails to accurately capture the discontinuity, resulting in a smoothed transition across the jump. In contrast, the WENO reconstruction preserves the sharpness of the discontinuity while maintaining high accuracy in smooth regions. This demonstrates the enhanced capability of non-linear schemes in handling surfaces with localized abrupt changes.

\begin{figure}[h!]
    \centering
    \begin{tabular}{ccc}
    \includegraphics[width=0.32\textwidth]{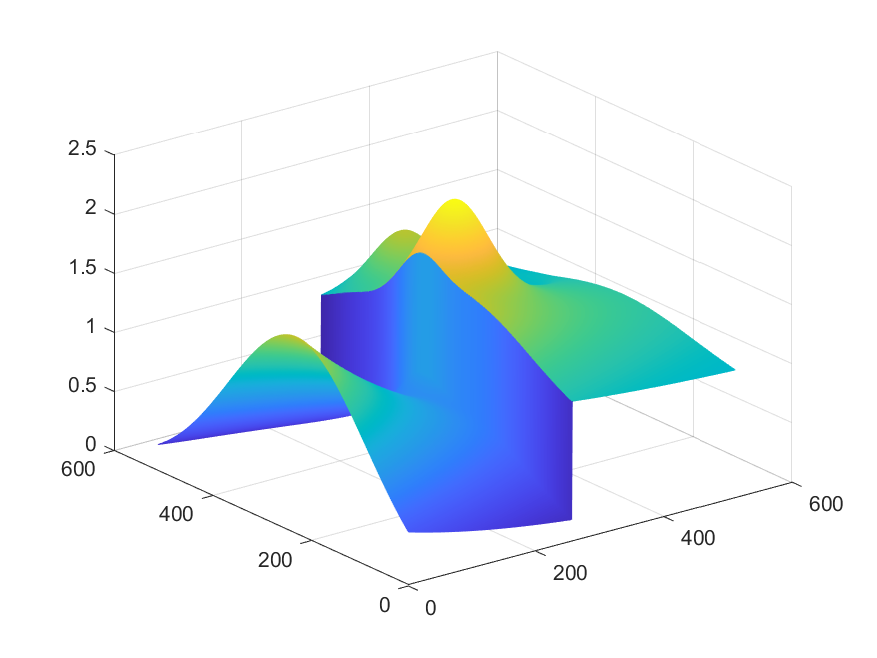} &
    \includegraphics[width=0.32\textwidth]{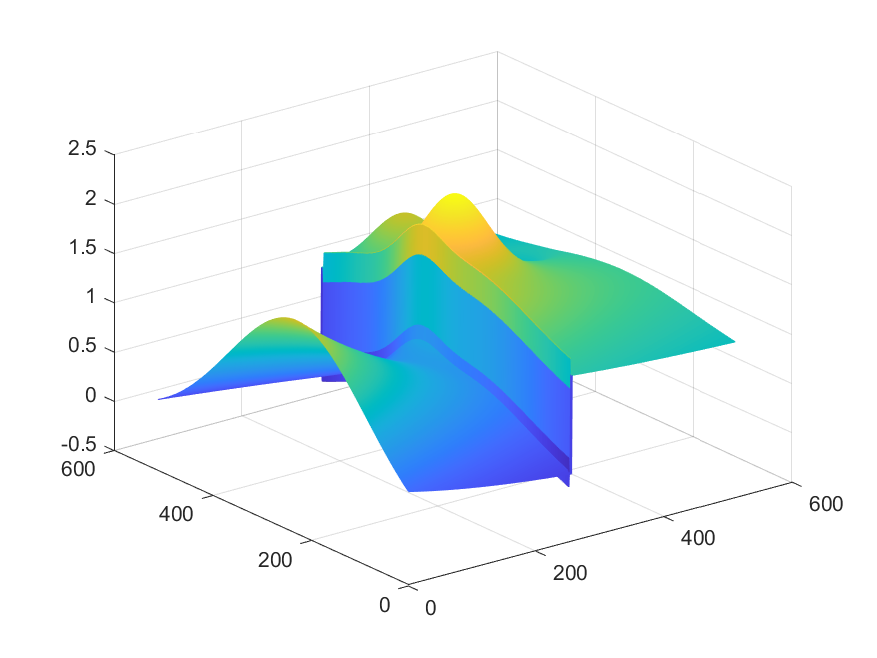} &
    \includegraphics[width=0.32\textwidth]{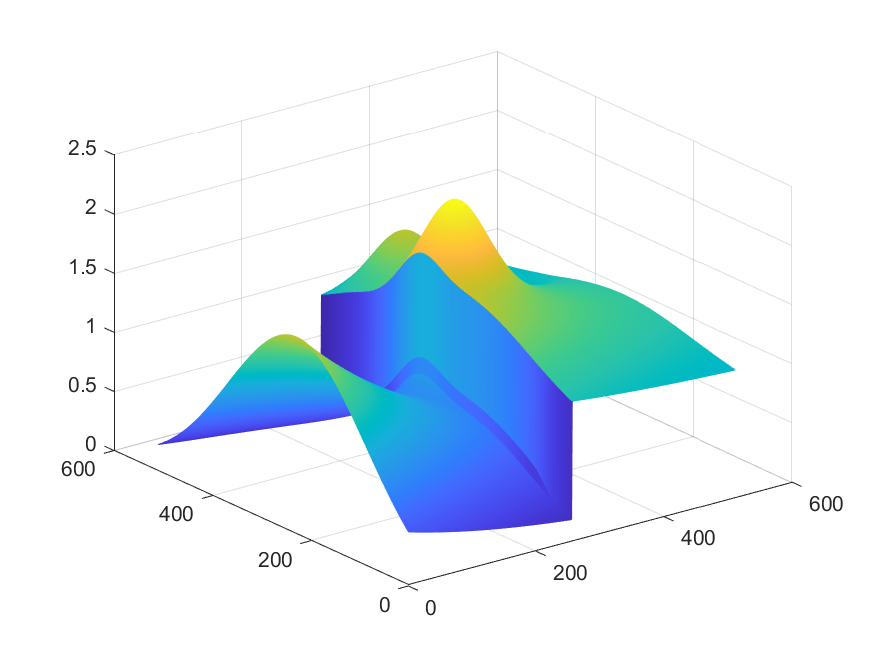} \\
    (a) & (b) & (c)\\
    \includegraphics[width=0.32\textwidth]{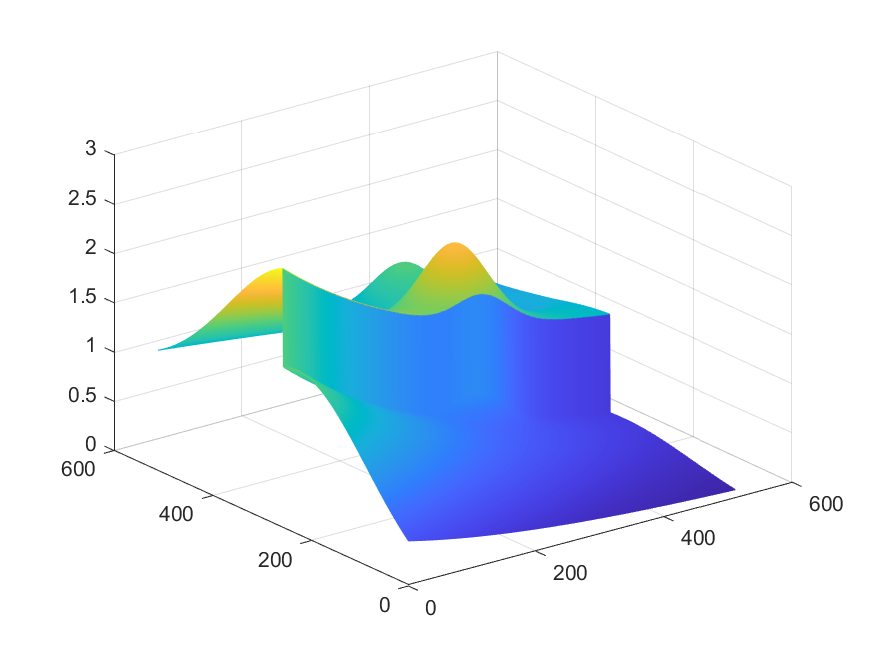} &
    \includegraphics[width=0.32\textwidth]{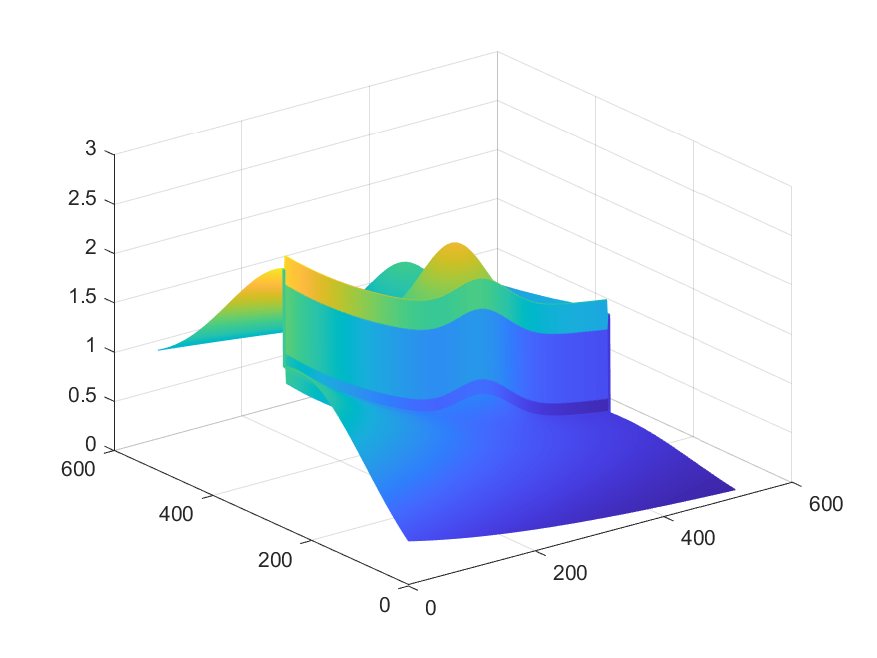} &
    \includegraphics[width=0.32\textwidth]{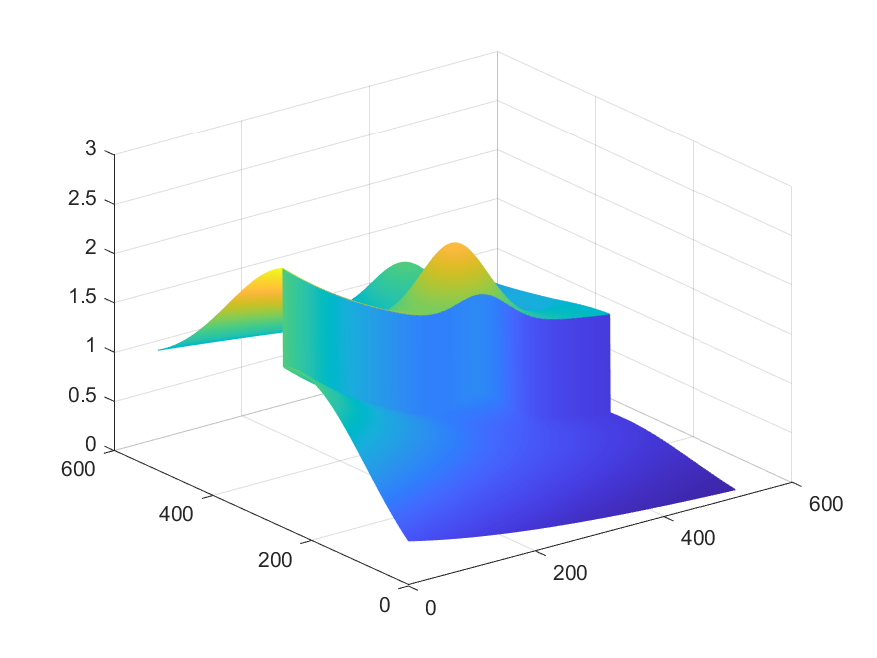} \\
    (d) & (e) & (f)
    \end{tabular}
    \caption{Three-dimensional visualization of the modified Franke surface \eqref{franke_disc} with a straight-line discontinuity. 
    The top row (panels (a)--(c)) corresponds to a horizontal jump along $y=0$: (a) original surface (horizontal discontinuity), (b) reconstruction with the linear predictor (smoothed jump), and (c) reconstruction with the progressive WENO method (edge preserved). 
    The bottom row (panels (d)--(f)) corresponds to a vertical jump along $x=0$: (d) original surface (vertical discontinuity), (e) linear reconstruction, and (f) WENO reconstruction.}
    \label{fig:franke_recon}
\end{figure}

The errors measured in the $\ell^2$ norm confirm this behavior: the linear scheme produced an error of $E_2^{\mathrm{LIN}} =$ {\color{black} 2.6357e-02} for both discontinuity cases, whereas the WENO reconstruction achieved $E_2^{\mathrm{WENO}} =$ {\color{black} 1.2795e-02} for the horizontal discontinuity and $E_2^{\mathrm{WENO}} =$ {\color{black} 3.8177e-06} for the vertical one. These results highlight the superior performance of WENO-type schemes in reconstructing surfaces with discontinuities.

In summary, the WENO predictor significantly reduces the $\ell^2$ error when the discontinuity is horizontal. For the vertical jump, WENO still outperforms the linear predictor but with a smaller improvement, likely due to stencil orientation effects.

{\color{black} The error values corresponding to the test functions are summarized in Table \ref{tab:errors_section62}.}
 
\begin{table}[H]
\centering
{\color{black}
\begin{tabular}{llll}
\hline
Function & Case & $E_2^{\rm LIN}$ & $E_2^{\rm WENO}$ \\ \hline
$g$ & -- & 4.2171e-01 & 1.8473e-05 \\
$h$ & -- & 2.9468e-02 & 7.8814e-03 \\
$f$ & Horizontal disc. & 2.6357e-02 & 1.2795e-02 \\
$f$ & Vertical disc. & 2.6357e-02 & 3.8177e-06 \\ \hline
\end{tabular}
\caption{Discrete $\ell^2$ errors for the three test functions using the linear and WENO reconstructions.}}
\label{tab:errors_section62}
\end{table}

\subsection{Digital image compression}

The objective in this {\color{black} subsection} is to evaluate the fidelity of the reconstruction in the multiresolution {\color{black} context} using both methods: linear and non-linear. For this goal, we measure  the number of non-zero coefficients (NNZ), as well as the global reconstruction errors of the $\ell^1$ and $\ell^2$ discrete norms, which we will define below. 

Unless otherwise stated, in almost all of our numerical tests, we take the following parameters: $L=4$, which specifies the number of multiresolution levels; $\varepsilon_L=30$, truncation constant; we only store the errors that are less than or equal to this value. When the resolution is reduced by one level, this value is divided by two. And the size  $N=512$, indicating the dimensions of the input matrices. Let $A\in \mathbb{R}^{N\times N}$ be the original image, we measure the error in the discrete norm $\ell^p$ with $p=1,2$ defined by:

\begin{align}
\mathbf{E}_{1} &= 
\frac{1}{N^2} \sum_{i,j=1}^{N} 
\left| \tilde{A}_{i,j} - A_{i,j} \right|, 
\label{eq:E1M}\\[6pt]
\mathbf{E}_{2} &=
\frac{1}{N}
\sqrt{ \sum_{i,j=1}^{N} 
\left( \tilde{A}_{i,j} - A_{i,j} \right)^2 },
\label{eq:E2M}
\end{align}
where $\tilde{A}$ is the image resulting from applying multiresolution to user-specified levels $L$ and $\varepsilon_L$. For colour images, rather than reporting the results for each channel (R, G, and B) independently, we compute the total NNZ as the sum across all channels and the average reconstruction errors for $\mathbf{E}_1$ and $\mathbf{E}_2$  are given by
\begin{equation} \label{E}
\mathbf{E}_i = \left( \mathbf{E}^{R}_i + \mathbf{E}^{G}_i + \mathbf{E}^{B}_i \right)/3, \quad i = 1, 2.
\end{equation}
It is important to note that, instead of representing a digital image as a single matrix $A \in \mathbb{R}^{N \times N}$, we incorporate an additional dimension to account for color. This leads to three separate matrices of size $N \times N$, corresponding to the three primary color channels that compose a digital photograph. Figure~\ref{originalC} displays the original RGB color images that will be used in our experiments.

\begin{figure}[h!]
     \centering
     \begin{tabular}{cc}
         \includegraphics[width=0.3\hsize]{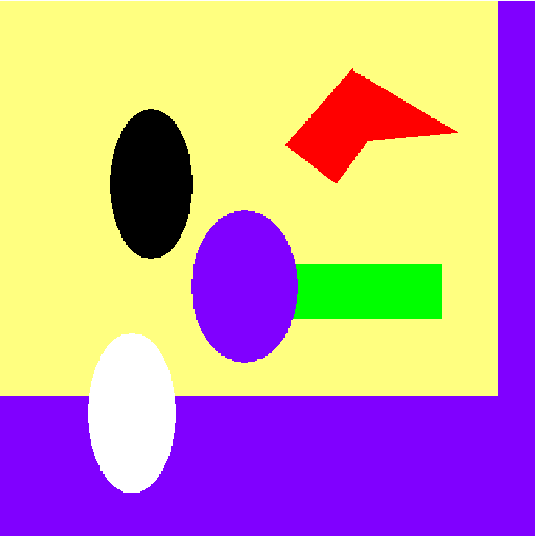} &
         \includegraphics[width=0.3\hsize]{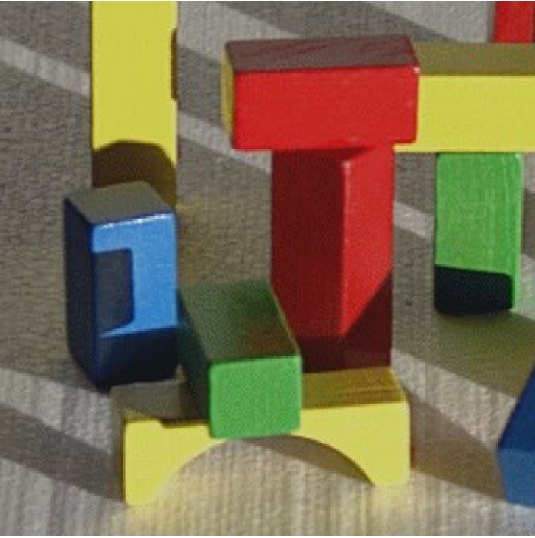}\\ 
         (a)  & (b)\\
         \includegraphics[width=0.3\hsize]{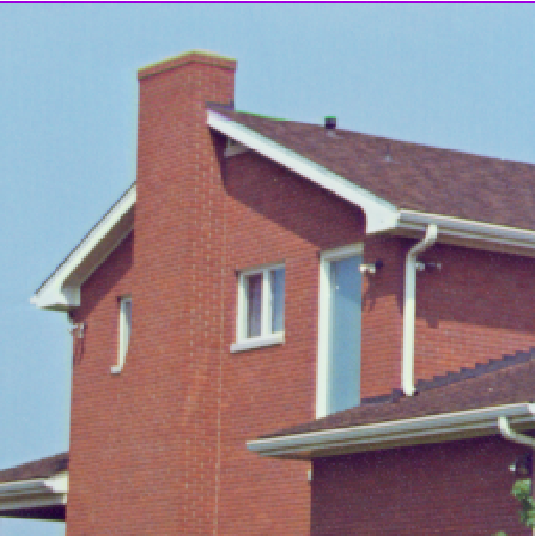} & \includegraphics[width=0.3\hsize]{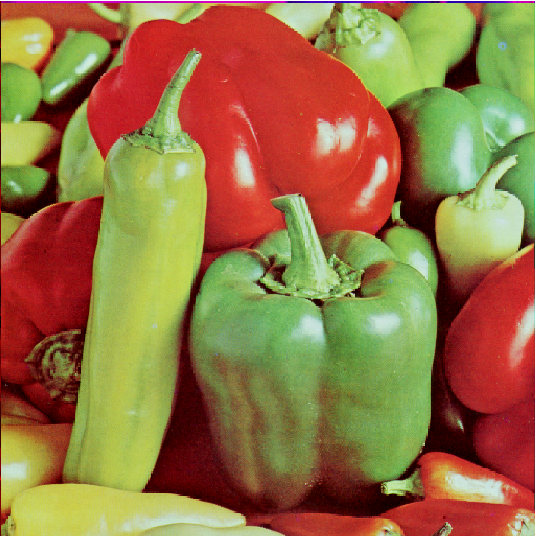}\\
         (c)  & (d)
     \end{tabular}
    \caption{Coloured images (RGB) in its original version \textbf{(a)} \textit{Geometric}; \textbf{(b)} \textit{Blocks}; \textbf{(c)} \textit{Red house}; \textbf{(d)} \textit{Peppers}.}
    \label{originalC}
\end{figure}


We begin our analysis with the first image shown in Figure~\ref{originalC}. This image, referred to as \textit{Geometric}, serves as the starting point for our study. This image has been previously analyzed in \cite{amatliantrijuanimag}, where its structural characteristics were highlighted as particularly challenging for reconstruction methods due to the presence of sharp edges and geometric patterns. In our study, the original image is represented as a tensor of size $256 \times 256 \times 3$, preserving the RGB color channels.

\begin{figure}[h!]
     \centering
     \begin{tabular}{cc}
         \includegraphics[width=0.3\hsize]{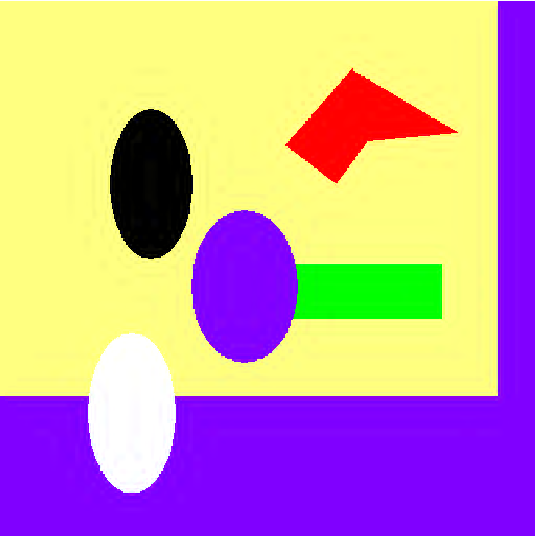} & \includegraphics[width=0.3\hsize]{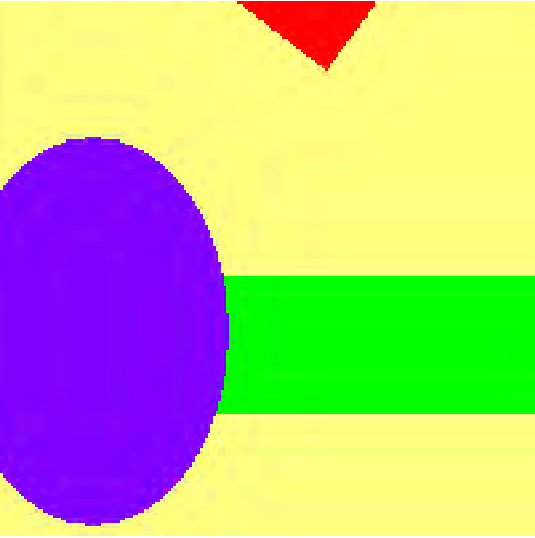} \\
         (a)  & (b) \\
         \includegraphics[width=0.3\hsize]{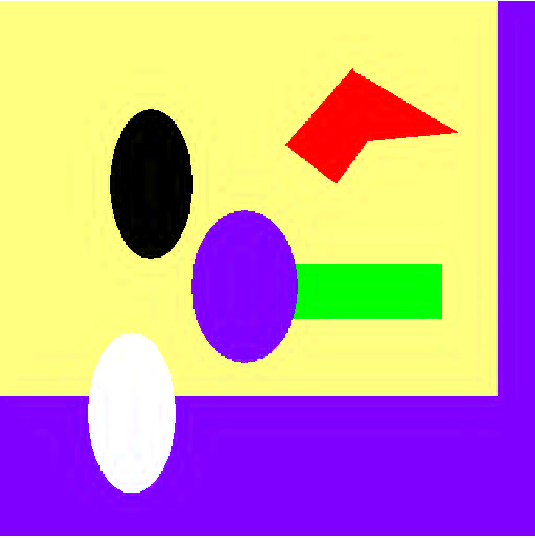} & \includegraphics[width=0.3\hsize]{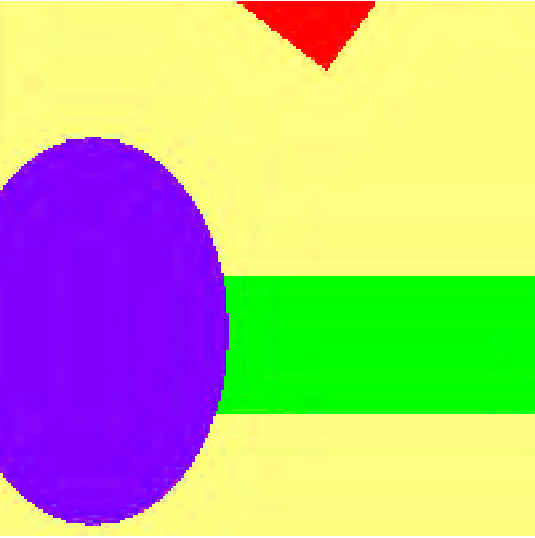} \\
         (c)  & (d) \\
     \end{tabular}
    \caption{\textit{Geometric} image reconstructed using the two methods: \textbf{(a)} linear reconstruction and \textbf{(c)} progressive bivariate WENO-2D reconstruction, with $\varepsilon_L=30$. Panels \textbf{(b)} and \textbf{(d)} show magnified (zoomed-in) views of the corresponding regions highlighted in (a) and (c), respectively, to illustrate local differences near edges and discontinuities.}
    \label{figura6}
\end{figure}

At first glance, Figure~\ref{figura6} does not reveal noticeable differences between the two reconstruction methods under comparison. Even when zooming in on specific regions of the image, these differences remain subtle and difficult to perceive visually. To gain a clearer understanding of the differences between the two methods, Table \ref{tabla6} summarises the information needed to draw conclusions. 


\begin{table}[h!]
\centering
\renewcommand{\arraystretch}{1.2}
\setlength{\tabcolsep}{6pt}
\begin{tabular}{lcccccccccccc}
\hline
 & \multicolumn{3}{c}{$\varepsilon_L = 5$} 
 & \multicolumn{3}{c}{$\varepsilon_L = 10$} 
 & \multicolumn{3}{c}{$\varepsilon_L = 20$} 
 & \multicolumn{3}{c}{$\varepsilon_L = 30$} \\
\hline
\textbf{Method} & $\mathbf{E_1}$ & $\mathbf{E_2}$ & \textbf{NNZ} 
 & $\mathbf{E_1}$ & $\mathbf{E_2}$ & \textbf{NNZ} 
 & $\mathbf{E_1}$ & $\mathbf{E_2}$ & \textbf{NNZ} 
 & $\mathbf{E_1}$ & $\mathbf{E_2}$ & \textbf{NNZ} \\[3pt]
\hline
LIN  & 0.2257 &0.6467 & 53562&0.5744 & 1.3755 & 39190 & 1.2040 & 2.7845  & 26171 &  1.6501 & 3.7677 & 21153   \\[3pt]
WENO & 0.2040 & 1.0532 & 24140& 0.5004&1.5535 & 21579 & 1.0298 & 2.4339 & 17530 & 1.5179 & 3.6463 & 13975\\[3pt]
\hline
\end{tabular}
\caption{
Results for the \textit{Geometric} image showing the values of the cell-average errors $\mathbf{E_1}$ and $\mathbf{E_2}$ (see Eqs.~\eqref{eq:E1M}--\eqref{E}), and the number of non-zero coefficients (NNZ) for different values of the parameter $\varepsilon_L$.}
\label{tabla6}
\end{table}

Table~\ref{tabla6} summarizes the results for the \textit{Geometric} image. In this case, the WENO reconstruction clearly outperforms the linear approach {\color{black} in terms of sparsity, achieving a reduction of approximately $33\%$ in the number of non-zero coefficients (NNZ) for $\varepsilon_L=30$, while maintaining comparable values of the errors $\mathbf{E}_1$ and $\mathbf{E}_2$}. This indicates that WENO attains a more efficient compression, reducing the number of active coefficients without compromising accuracy. The improvement is especially noticeable at higher compression levels, where the linear scheme exhibits a faster degradation of quality. Overall, these results highlight the strong adaptability of the WENO-based multiresolution framework when dealing with images that contain sharp edges and geometric patterns, confirming its robustness and effectiveness for high-fidelity image compression and reconstruction.

To further illustrate the numerical behavior of both reconstruction strategies, we proceed with the second image shown in Figure~\ref{originalC}, namely the colored \textit{Blocks} image.

\begin{figure}[h!]
     \centering
     \begin{tabular}{cc}
         \includegraphics[width=0.3\hsize]{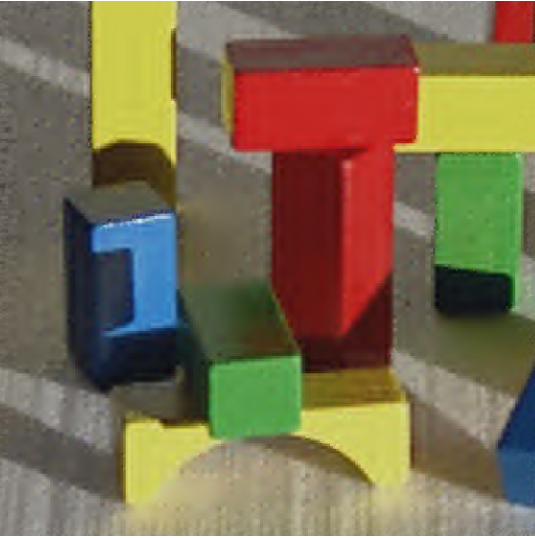} & \includegraphics[width=0.3\hsize]{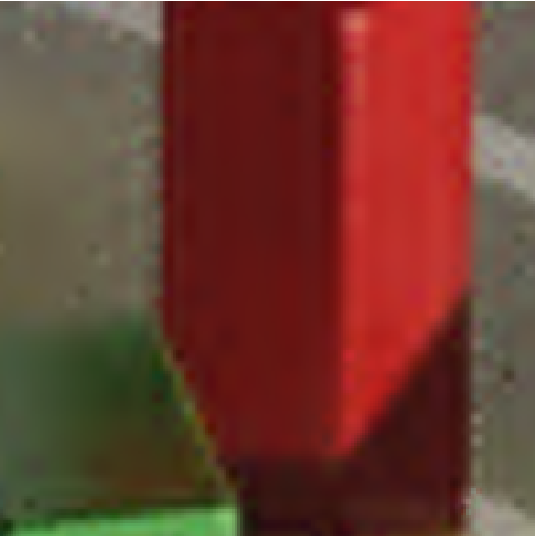} \\
         (a)  & (b) \\
         \includegraphics[width=0.3\hsize]{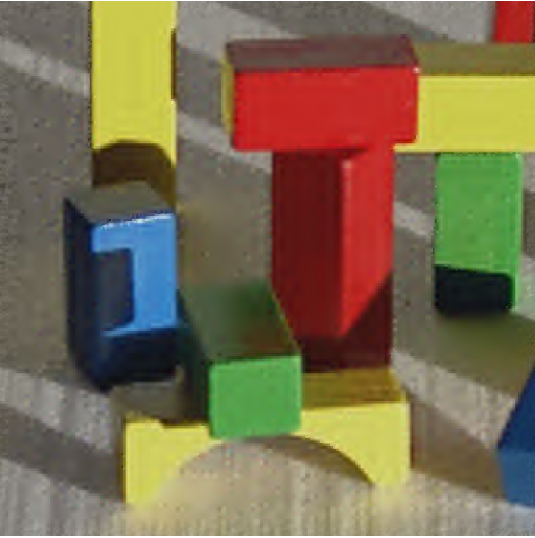} & \includegraphics[width=0.3\hsize]{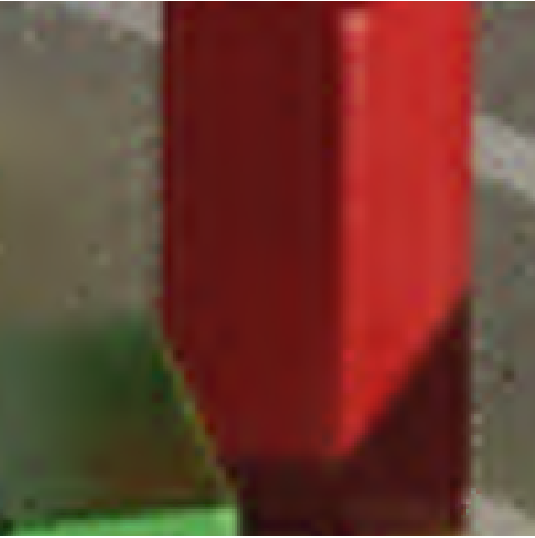} \\
         (c)  & (d) \\
     \end{tabular}
    \caption{\textit{Blocks} image reconstructed using the two methods: \textbf{(a)} linear reconstruction and \textbf{(c)} progressive bivariate WENO-2D reconstruction, with $\varepsilon_L=30$. Panels \textbf{(b)} and \textbf{(d)} show magnified (zoomed-in) views of the corresponding regions highlighted in (a) and (c), respectively, to illustrate local differences near edges and discontinuities.}
    \label{figura7}
\end{figure}

The visual results obtained by applying the linear and non-linear reconstruction methods are presented in Figure~\ref{figura7}. As in the previous case, the differences between the two approaches are not easily distinguishable through visual inspection alone, even when zooming into specific regions. Consequently, a quantitative comparison based on error metrics and sparsity (NNZ) is provided in Table \ref{tabla7} to enable a rigorous assessment of both methods.

\begin{table}[h!]
\centering
\renewcommand{\arraystretch}{1.2}
\setlength{\tabcolsep}{6pt}
\begin{tabular}{lcccccccccccc}
\hline
 & \multicolumn{3}{c}{$\varepsilon_L = 5$} 
 & \multicolumn{3}{c}{$\varepsilon_L = 10$} 
 & \multicolumn{3}{c}{$\varepsilon_L = 20$} 
 & \multicolumn{3}{c}{$\varepsilon_L = 30$} \\
\hline
\textbf{Method} & $\mathbf{E_1}$ & $\mathbf{E_2}$ & \textbf{NNZ} 
 & $\mathbf{E_1}$ & $\mathbf{E_2}$ & \textbf{NNZ} 
 & $\mathbf{E_1}$ & $\mathbf{E_2}$ & \textbf{NNZ} 
 & $\mathbf{E_1}$ & $\mathbf{E_2}$ & \textbf{NNZ} \\[3pt]
\hline
LIN  & 0.9507 & 1.2950 & 84741 & 1.9548 & 2.6007 & 42682 & 3.0937 &  4.1437 & 16241 & 3.7475  & 5.0802 &  9427  \\[3pt]
WENO & 1.2482 & 1.6772 & 91284 & 2.1258 & 2.8198  & 44954 & 3.1541 & 4.2252  & 16444 & 3.7751  & 5.1154 & 8978\\[3pt]
\hline
\end{tabular}
\caption{
Results for the \textit{Blocks} image showing the values of the cell-average errors $\mathbf{E_1}$ and $\mathbf{E_2}$ (see Eqs.~\eqref{eq:E1M}--\eqref{E}), and the number of non-zero coefficients (NNZ) for different values of the parameter $\varepsilon_L$.}
\label{tabla7}
\end{table}

Table~\ref{tabla7} presents the results for the \textit{Blocks} image, reporting the reconstruction errors ($\mathbf{E}_1$, $\mathbf{E}_2$) and the number of non-zero coefficients (NNZ) for different values of the truncation constant $\varepsilon_L$. As $\varepsilon_L$ increases, both methods exhibit the expected trend of rising errors and decreasing NNZ, reflecting the trade-off between accuracy and compression. For all thresholds, the linear predictor attains slightly smaller errors, whereas WENO produces nearly identical sparsity levels. The differences between the two schemes become minimal at higher compression levels, where WENO even achieves a marginal reduction in NNZ while maintaining comparable accuracy. Overall, both methods behave in a very similar manner for this test, indicating that WENO provides a stable and reliable alternative capable of preserving discontinuities effectively under strong compression.




Now, we consider the third image shown in Figure~\ref{originalC}, referred to as \textit{Red house}. In this case, the original image $(a)$ has dimensions $256 \times 256 \times 3$, where $N = 256$ corresponds to the spatial resolution and the third dimension accounts for the three color channels (RGB).

\begin{figure}[h!]
     \centering
     \begin{tabular}{cc}
         \includegraphics[width=0.3\hsize]{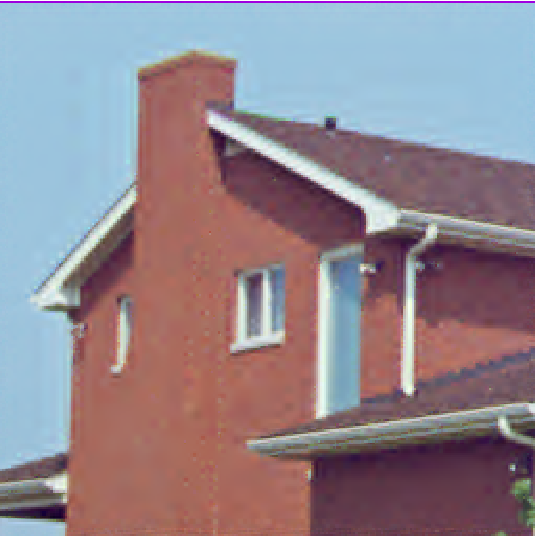} & \includegraphics[width=0.3\hsize]{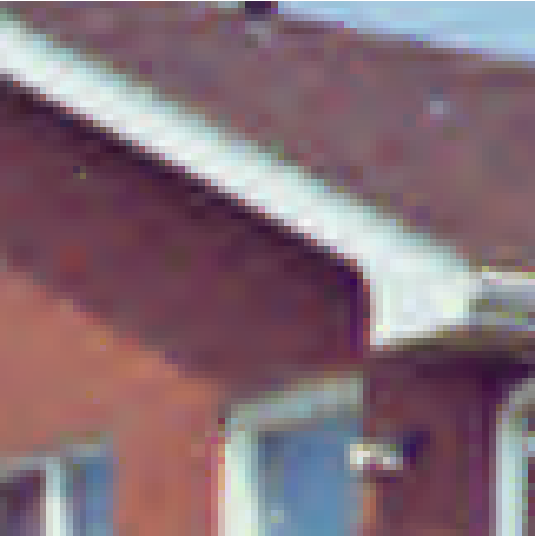} \\
         (a)  & (b) \\
         \includegraphics[width=0.3\hsize]{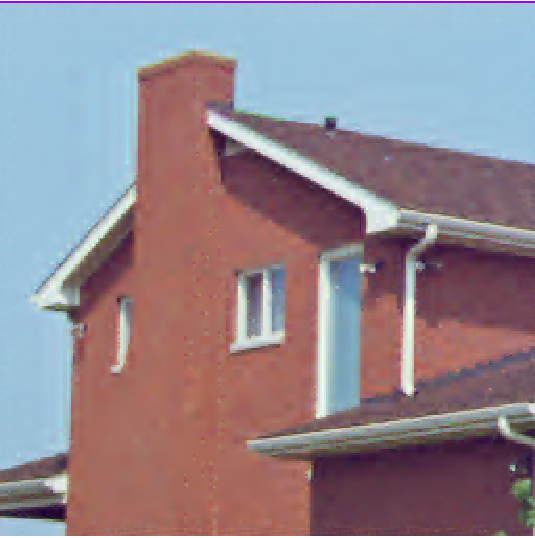} & \includegraphics[width=0.3\hsize]{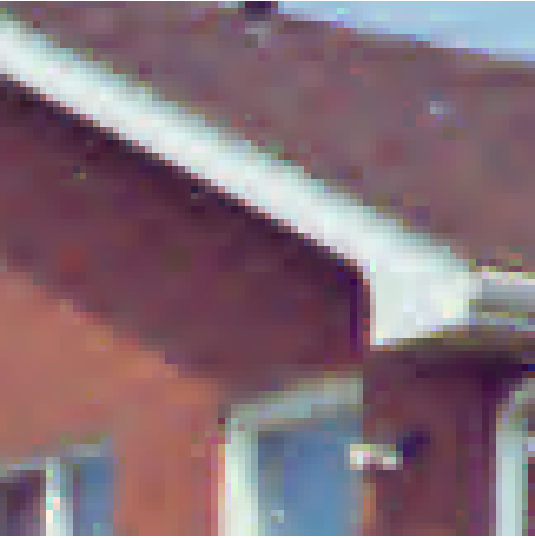} \\
         (c)  & (d) \\
     \end{tabular}
    \caption{\textit{Red house} image reconstructed using the two methods: \textbf{(a)} linear reconstruction and \textbf{(b)} progressive bivariate WENO-2D reconstruction, with $\varepsilon_L=30$. Panels \textbf{(c)} and \textbf{(d)} show magnified (zoomed-in) views of the corresponding regions highlighted in (a) and (b), respectively, to illustrate local differences near edges and discontinuities.}
    \label{figura4}
\end{figure}

We proceed analogously to the previous cases and present the corresponding table to highlight the differences between the two reconstruction methods that are not visually discernible in Figure \ref{figura4}.

\begin{table}[h!]
\centering
\renewcommand{\arraystretch}{1.2}
\setlength{\tabcolsep}{6pt}
\begin{tabular}{lcccccccccccc}
\hline
 & \multicolumn{3}{c}{$\varepsilon_L = 5$} 
 & \multicolumn{3}{c}{$\varepsilon_L = 10$} 
 & \multicolumn{3}{c}{$\varepsilon_L = 20$} 
 & \multicolumn{3}{c}{$\varepsilon_L = 30$} \\
\hline
\textbf{Method} & $\mathbf{E_1}$ & $\mathbf{E_2}$ & \textbf{NNZ} 
 & $\mathbf{E_1}$ & $\mathbf{E_2}$ & \textbf{NNZ} 
 & $\mathbf{E_1}$ & $\mathbf{E_2}$ & \textbf{NNZ} 
 & $\mathbf{E_1}$ & $\mathbf{E_2}$ & \textbf{NNZ} \\[3pt]
\hline
LIN  & 0.8846 & 1.2167 & 82358 & 1.7732 & 2.4516  & 45456  & 2.8884 & 4.0535 & 19231 & 3.5162 & 5.0635 &  11737   \\[3pt]
WENO & 1.2150 & 1.6652 & 91118 & 2.0398 & 2.8044  & 46110  & 3.0459 & 4.2792 & 19152 & 3.6141 & 5.2142 &  11864\\[3pt]
\hline
\end{tabular}
\caption{
Results for the \textit{Red house} image showing the values of the cell-average errors $\mathbf{E_1}$ and $\mathbf{E_2}$ (see Eqs.~\eqref{eq:E1M}--\eqref{E}), and the number of non-zero coefficients (NNZ) for different values of the parameter $\varepsilon_L$.}
\label{tabla4}
\end{table}

Table \ref{tabla4} compares the performance of LIN and WENO in terms of reconstruction errors ($\mathbf{E}_1$, $\mathbf{E}_2$) and sparsity (NNZ) for different values of the truncation constant $\varepsilon_L$. At lower thresholds, such as $\varepsilon_L = 5$ and $\varepsilon_L = 10$, the linear predictor produces smaller errors and slightly fewer non-zero coefficients, offering a more accurate and compact representation. In contrast, at higher thresholds, $\varepsilon_L = 20$ and $\varepsilon_L = 30$, the WENO reconstruction achieves lower errors and comparable or even smaller NNZ values. Overall, both methods behave similarly across the full range of compression levels, indicating that, in this particular case, the non-linear WENO adaptation performs on par with the linear scheme while maintaining robustness in regions with sharper transitions.


Finally, we conclude the analysis with the fourth image shown in Figure~\ref{originalC}, which corresponds to the well-known \textit{Peppers} test image characterized by smooth color variations and curved structures.


\begin{figure}[h!]
     \centering
     \begin{tabular}{ccc}
         \includegraphics[width=0.3\hsize]{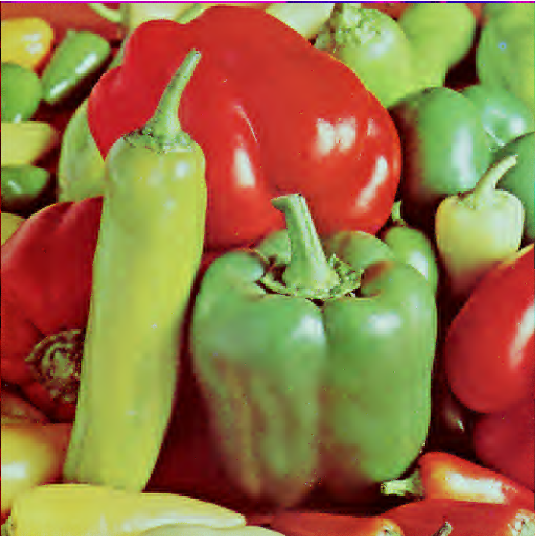} & \includegraphics[width=0.3\hsize]{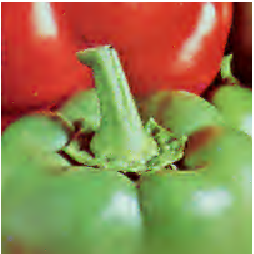} & \includegraphics[width=0.3\hsize]{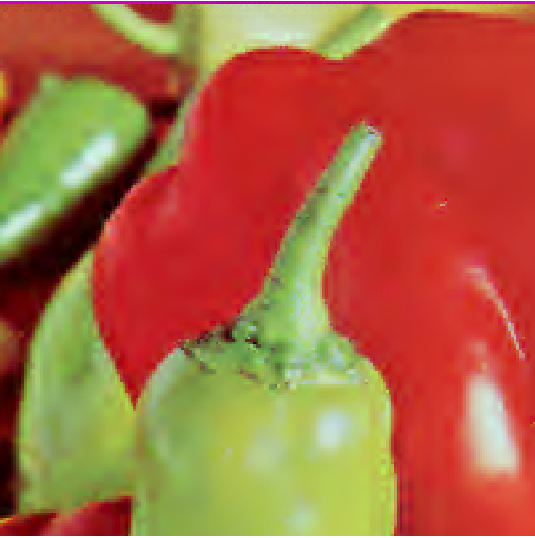}\\
          (a)  & (b)  & (c) \\
         \includegraphics[width=0.3\hsize]{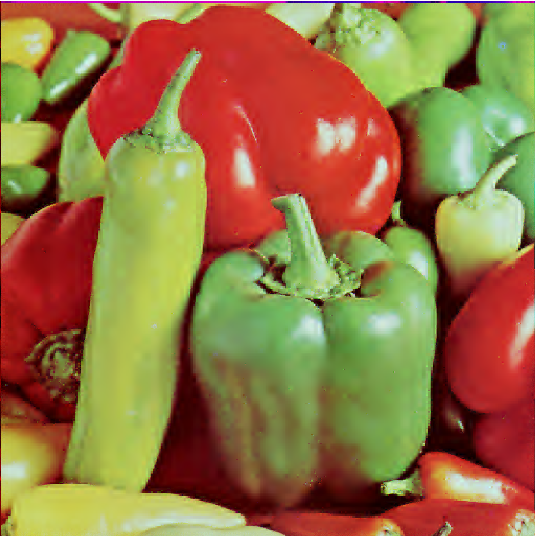} &  \includegraphics[width=0.3\hsize]{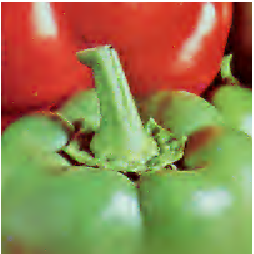} & \includegraphics[width=0.3\hsize]{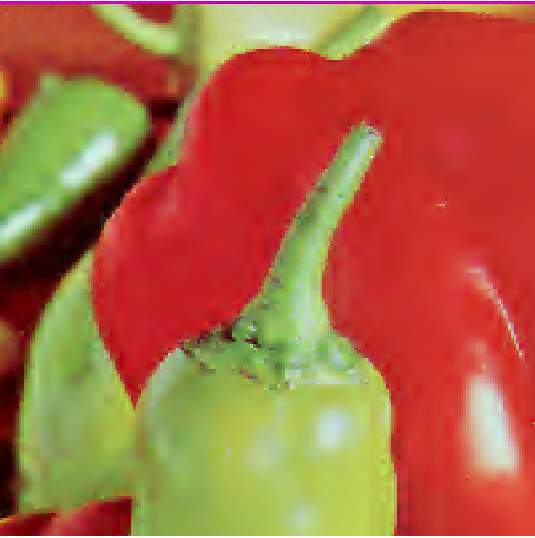}\\
         (d) & (e) & (f) \\
     \end{tabular}
    \caption{Reconstruction of the \textit{Peppers} image using linear and non-linear multiresolution methods. Panels \textbf{(a)--(c)} correspond to the linear reconstruction: (a) full image, and (b)--(c) zoomed-in regions highlighting edge smoothness. Panels \textbf{(d)--(f)} show the results obtained with the proposed non-linear progressive WENO method: (d) full image, and (e)--(f) corresponding zooms illustrating improved edge preservation and reduced artifacts.}

    \label{figura5}
\end{figure}

The results of applying the linear and non-linear reconstruction methods are shown in Figure~\ref{figura5}. However, as in the previous case, visual inspection alone does not reveal significant differences between the two approaches, even when zooming into specific regions. Therefore, a quantitative analysis based on error metrics and the number of non-zero coefficients (NNZ) is required to assess the performance of both methods properly. This information is provided in Table \ref{tabla5}.

\begin{table}[h!]
\centering
\renewcommand{\arraystretch}{1.2}
\setlength{\tabcolsep}{6pt}
\begin{tabular}{lcccccccccccc}
\hline
 & \multicolumn{3}{c}{$\varepsilon_L = 5$} 
 & \multicolumn{3}{c}{$\varepsilon_L = 10$} 
 & \multicolumn{3}{c}{$\varepsilon_L = 20$} 
 & \multicolumn{3}{c}{$\varepsilon_L = 30$} \\
\hline
\textbf{Method} & $\mathbf{E_1}$ & $\mathbf{E_2}$ & \textbf{NNZ} 
 & $\mathbf{E_1}$ & $\mathbf{E_2}$ & \textbf{NNZ} 
 & $\mathbf{E_1}$ & $\mathbf{E_2}$ & \textbf{NNZ} 
 & $\mathbf{E_1}$ & $\mathbf{E_2}$ & \textbf{NNZ} \\[3pt]
\hline
LIN  & 2.0410 & 2.7260 & 248514 & 3.6066 & 4.7234 & 90166  & 4.8058 & 6.4947 & 30356 & 5.4736 & 7.5959 &   17409  \\[3pt]
WENO & 2.1114 & 2.8487 & 247205 & 3.6544 & 4.8285 & 89663  & 4.8307 & 6.6077 & 31289 & 5.5066 & 7.7393 & 18340 \\[3pt]
\hline
\end{tabular}
\caption{
Results for the \textit{Peppers} image showing the values of the cell-average errors $\mathbf{E_1}$ and $\mathbf{E_2}$ (see Eqs.~\eqref{eq:E1M}--\eqref{E}), and the number of non-zero coefficients (NNZ) for different values of the parameter $\varepsilon_L$.}
\label{tabla5}
\end{table}

Table~\ref{tabla5} shows results for the \textit{Peppers} image. In this case, the trend is opposite to that observed for the \textit{Red house} image. For lower thresholds, such as $\varepsilon_L = 5$ and $\varepsilon_L = 10$, the WENO reconstruction produces slightly smaller errors than the linear predictor, achieving a marginal improvement in accuracy with a similar number of non-zero coefficients. However, for higher thresholds ($\varepsilon_L = 20$ and $\varepsilon_L = 30$), the linear method attains marginally lower errors, while both approaches exhibit nearly identical sparsity patterns. Overall, the two reconstructions behave in a very similar way across the full range of compression levels, indicating that their performance is practically equivalent for this image, with WENO maintaining good stability under strong compression.

\section{Conclusions and future work}

In this article, we have presented a non-linear progressive-order interpolation algorithm that improves upon the WENO method in the context of cell averages. This type of data discretization is particularly suitable for applications such as digital image processing. The main difference with respect to standard WENO methods lies in the evaluation step, which involves computing an average on a finer discretization scale.

We have applied the proposed methods to digital image compression using Harten’s multiresolution framework \cite{Har96}. This new approximation approach opens the possibility of modifying the type of discretization employed; for instance, one could use other kinds of averages, such as hat averages or any convolution thereof.

Such techniques can also be applied to the numerical solution of partial differential equations. Moreover, other interpolation strategies, such as radial basis function (RBF) methods (see \cite{cavo16}), can be implemented using the same algorithmic framework introduced here.

In summary, this generalization opens a wide range of possibilities, both from a theoretical and a practical point of view.

%
%
%

\end{document}